\newtheorem{theorem}{Theorem}
\newtheorem{definition}{Definition}
\newtheorem{example}{Example}
\newtheorem{proposition}{Proposition}
\newtheorem{remark}{\textit{Remark}}
\def\moverlay{\mathpalette\mov@rlay}
\def\mov@rlay#1#2{\leavevmode\vtop{%
   \baselineskip\z@skip \lineskiplimit-\maxdimen
   \ialign{\hfil$\m@th#1##$\hfil\cr#2\crcr}}}
\newcommand{\charfusion}[3][\mathord]{
    #1{\ifx#1\mathop\vphantom{#2}\fi
        \mathpalette\mov@rlay{#2\cr#3}
      }
    \ifx#1\mathop\expandafter\displaylimits\fi}
\newcommand{\bigcupdot}{\charfusion[\mathop]{\bigcup}{\cdot}}
\begin{document}
    \title{\vspace{-2cm}On the Activities and Partitions of the Vertex Subsets of Graphs}
\author{Kristina Dedndreaj\thanks{This research has been funded by the European Social Fund (ESF).} }
\author{ Peter Tittmann}
\affil{University of Applied Sciences Mittweida}

\date{\today}
\maketitle

\begin{abstract}
    Crapo introduced a construction of interval partitions of Boolean lattice for sets equipped with matroid structure. This construction, in the context of graphic matroids, is related to the notion of edge activities introduced by Tutte. This implies that each spanning subgraph of a connected graph can be constructed from edges of exactly one spanning tree by deleting a unique subset of internally active edges and adding a unique subset of externally active edges. Since the family of  vertex independent sets does not give rise to a matroid structure we can not apply Crapo's construction on the vertex set when using the family of independent sets as generating sets. In this paper, we introduce the concept of vertex activities to tackle the problem of generating interval partitions of the Boolean lattice of the vertex set. We show how to generate a cover, present some properties related to vertex activities of some special maximal independent sets and consider some special graphs. Finally, we will show that level labellings in pruned graphs always generate a partition.
\end{abstract}

\section{Introduction}
 \subsection{Definitions and Notions}
Here we will mention briefly the main concepts which are used in this paper. The  terminology which is not specified here conforms to that of standard textbooks in Graph Theory.

We will deal with simple, connected and undirected graphs $G=(V(G),E(G))=(V,E)$ where $V=\{1,\ldots,n\}$. A \textit{vertex labelling} of $G$ is a bijective mapping from $V$ to the set $\{1,\ldots,n\}$. A graph $H$ is a \textit{subgraph} of $G$ if $V(H)\subseteq V(G)$ and $E(H)\subseteq E(G)$. A subgraph $H$ of $G$ is spanning if $V(H)=V(G)$. If $H$ is a subgraph of $G$, then $G$ is said to be a supergraph of $H$. If $S\subseteq V(G)$, then the \textit{induced subgraph} $G[S]$ is the graph whose vertex set is $S$ and its edge set consists of edges of $E(G)$ which have both endpoints in $S$. If $S=\{v\}$ for some $v\in V$, then we write $G[v]$. 

A subset of vertices of $G$ is called \textit{independent} if no two vertices are adjacent. A \textit{maximal independent set} is an independent set which is not a proper  subset of any independent set. The family of maximal independent sets of a graph $G$ is denoted by $\mathscr{M}(G)$.

Let $v\in V$, the \textit{open neighbourhood} of the vertex $v$, denoted by $N(v)$ is the set of vertices which are adjacent to $v$ in $G$,

\begin{equation*}
    N(v)=\{u\in V: \{u,v\}\in E\}.
\end{equation*}
The \textit{closed neighbourhood} of $v$, denoted $N[v]$ is the set of vertices adjacent to $v$ together with $v$. That is, $N[v]=N(v)\cup \{v\}$. Analogously, for $S\subseteq V$ we define 
    \begin{equation*}
        N(S)=\bigcup\limits_{v\in S}N(v)
    \end{equation*}
and $N[S]=N(S)\cup S$. A set $S\subseteq V$ is called \textit{dominating} if $N[S]=V$.
If $A\subseteq B\subseteq V$, the set $[A;B]=\{X:A\subseteq X \subseteq B\}$ is called an \textit{interval}. A \textit{cover} of $V$ is a set of non-empty intervals $[A_1;B_1],\ldots,[A_k;B_k]$ such that 
    \begin{equation*}
        V=\bigcup\limits_{i=1}^k [A_i;B_i].
    \end{equation*}
If the intervals $[A_1;B_1],\ldots,[A_k;B_k]$ are pairwise disjoint they form a \textit{partition} or \textit{interval partition} of $V$.

\subsection{Background and Motivation}
Crapo introduced in \cite{crapo1969tutte} a construction of the interval partitions of the Boolean lattice of a set equipped with matroid structure.  This construction is related to the notion of activities as defined by Tutte in \cite{tutte1954contribution} which he used to state the polynomial which is now called after his name. The Tutte polynomial of a graph $G$ is denoted by $T(G;x,y)$ and defined as 
    \begin{equation}
         T(G;x,y)=\sum\limits_{\text{T  is a spanning tree of G}}x^{|i(T)|}y^{|e(T)|},
    \end{equation}
where  $i(T)$ is the set of internal active edges of the tree $T$ and $e(T)$ is the set of external active edges of the tree $T$. This notion of activities assumes a linear order on the edge set. As shown in \cite{tutte1954contribution}, the Tutte polynomial is independent of the linear order defined on the edge set.

Since the family of trees of a simple, connected, undirected graph gives rise to a matroid structure on the edge set, we can use this result to construct the family of spanning subgraphs of such graphs. This means that each spanning subgraph of a connected graph can be constructed from edges of exactly one spanning spanning tree by deleting a unique subset of
internally active edges and adding a unique subset of externally active edges. That is,
    \begin{equation}
        2^E=\bigcupdot_{\text{T is a spanning tree of }G}[T\setminus i(T); T\cup e(T)].
    \end{equation}
  For background on the Tutte polynomial the reader is referred to \cite{brylawski1992tutte}, \cite{ellis2011graph} and  \cite{sokal2005multivariate}.
  Different generalizations of the Tutte activities have been introduced in the literature, for example \cite{las1984tutte}, \cite{gordon1990generalized}, \cite{gessel1996tutte} and \cite{bernardi2008characterization} which also induce interval partition of subgraphs.
 
   The family of vertex independent sets does not give rise to a matroid structure in the vertex set because it does not satisfy the augmentation axiom. Therefore, we can not rely on the result of Crapo \cite{crapo1969tutte} to generate interval partitions of the Boolean lattice using the family of vertex independent sets as generating sets. In this paper we will introduce the  notion of vertex  activities with the intention of using it to generate interval partitions of the Boolean lattice of the vertex set.
  
\section{Vertex Activities}
\begin{definition}\label{extactivity}
  Let $G$ be a graph and let $A$ be an independent set of $G$. A vertex $v\in V\setminus A$ is called externally active in $A$ if there exists $a\in A$ such that $v\in N(a)$ and $v>a$. We denote the set of all externally active vertices with respect to $A$ by $Ext(A)$. So we have 
   $$Ext(A)=\{v\in A^{\complement}: \exists a\in A:a\in N(v) \text{ with } v>a\}.$$
\end{definition}

That is, a vertex outside an independent set $A$ is externally active with respect to $A$ if it is adjacent to and greater than some vertex in $A$. For a vertex $v$ in an independent set $A$ we define 

\begin{equation}
Subs(v)=\{u \in N(v): \big( A\setminus \{v\} \big)\cup \{u\} \text{ is independent}\},
\end{equation}

in other words, $Subs(v)$ is the subset of neighbors of $v$ which can substitute $v$ in $A$ while maintaining independence.

\begin{definition}\label{intactivity}
Let $G$ be a graph and let $A$ be an independent set of $G$. A vertex $v\in A$ is called internally active in $A$ if 
\begin{enumerate}
    \item $Subs(v)=\emptyset$ or
    \item $v>\max\{Subs(v)\}$ when $Subs(v)\neq\emptyset$.
\end{enumerate}

\end{definition}

That is, a vertex $v$ is internally active with respect to some independent set $A$ if $v$ is irreplaceable by any of its neighbors with greater label. We denote by $Int(A)$ the set of all internally active vertices of an independent set $A$ of $G$.

\begin{example}
In the graph on Figure \ref{fig1}, $Ext(\{3,5\})=\{4\}$ and $Int(\{3,5\})=\{5\}$.
\end{example}
 \begin{figure}[H]
 \centering
 \begin{tikzpicture}[
V/.style = {% V as Vortex
            circle,thick,fill=white},scale=0.7]
%draw-for the border
%drawing nodes
\node[V] (B) at (-1,0) [scale=0.6,draw,fill=black!15] {3};
\node[V] (C) at (1,0) [scale=0.6,draw] {4};
\node[V] (D) at (-2,-2) [scale=0.6,draw] {2};
\node[V] (1) at (2,-2) [scale=0.6,draw] {1};
\node[V] (2) at (0,-3.5) [scale=0.6,draw,fill=black!15] {5};
%drawing edges
\draw[black,very thick]
(B) to (C)
(B) to (D)
(D) to (C)
(D) to (2)
(2) to (1);
\end{tikzpicture}

\caption{A graph on 5 vertices}
\label{fig1}
\end{figure}

For an independent set $A$ we call $[A\setminus Int(A);A\cup Ext(A)]$ the interval generated by $A$. It turns out that the activities according to Definitions \ref{extactivity} and \ref{intactivity} always generate a cover. The proof of the following theorem is inspired by the proof of the Theorem 2.1 in \cite{AMC229}.

\begin{theorem}\label{covertheorem}
Let $G$ be a graph  with $V(G)=\{1,\ldots,n\}$ and $\mathscr{M}$ the family of maximal independent sets of $G$. The activities induce a cover of the Boolean lattice of the vertex set, i.e.,
        \begin{equation}\label{cover}
            \bigcup_{A \in \mathscr{M}}[A\setminus Int(A); A\cup Ext(A)]=2^V.
        \end{equation}
\end{theorem}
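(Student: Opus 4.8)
The plan is to show the two inclusions separately. The inclusion $\bigcup_{A\in\mathscr M}[A\setminus Int(A);A\cup Ext(A)]\subseteq 2^V$ is trivial, so the whole content is in proving that every $X\subseteq V$ lies in at least one of the intervals generated by a maximal independent set. So fix an arbitrary $X\subseteq V$; I need to produce a maximal independent set $A$ with $A\setminus Int(A)\subseteq X\subseteq A\cup Ext(A)$, i.e. with
\begin{equation*}
A\setminus X\subseteq Int(A)\qquad\text{and}\qquad X\setminus A\subseteq Ext(A).
\end{equation*}

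The natural construction is a greedy one driven by the vertex order. I would build $A$ by scanning vertices in increasing order $1,2,\ldots,n$ and deciding membership so as to keep $X$ inside the generated interval. A clean way to phrase it: run a greedy procedure that, at step $i$, puts $i$ into $A$ if $i$ is not already adjacent to a previously chosen element of $A$ \emph{and} either $i\in X$ or adding $i$ is "forced" in the sense needed below; otherwise leave $i$ out. One must arrange the tie-breaking so that (a) the result $A$ is a maximal independent set, (b) every vertex of $X$ not chosen for $A$ was excluded only because of a \emph{smaller} neighbour in $A$ — which is exactly external activity, giving $X\setminus A\subseteq Ext(A)$ — and (c) every vertex of $A$ not in $X$ could not have been replaced by any larger neighbour without destroying independence or leaving $X$, which is exactly internal activity, giving $A\setminus X\subseteq Int(A)$. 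Since the paper points to the proof of Theorem 2.1 in \cite{AMC229}, I expect the argument there to be precisely such a greedy sweep with a careful case analysis on each vertex, and I would adapt that bookkeeping verbatim.

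Concretely I would argue as follows. Start from $X$ and first delete from it every vertex that has a smaller neighbour already kept; call an omitted vertex \emph{good} if its omission is caused by a smaller kept neighbour. Then, among the surviving vertices, greedily add larger vertices (again in increasing order) that are not adjacent to anything currently in the set, until the set is maximal; any such newly added vertex $v\notin X$ is added only because every candidate that could replace it is either smaller than $v$ or not available, which forces $Subs(v)$ to have maximum $<v$ (or be empty), i.e. $v\in Int(A)$. The verification that the final $A$ is independent is immediate from the construction; maximality is guaranteed by running the second phase to completion. The two containment statements then follow by unwinding the definitions of $Ext$ and $Int$ applied to the $A$ just built.

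The main obstacle is the correctness of the case analysis rather than any deep idea: one has to check, for each vertex $v$, that exactly one of the four situations occurs (\,$v\in A\cap X$, or $v\in A\setminus X$ with $v$ internally active, or $v\in X\setminus A$ with $v$ externally active, or $v\notin A\cup X$\,), and in particular that a vertex of $A\setminus X$ genuinely has no larger substitute — this is where one must be careful that the greedy choices made for \emph{other} vertices do not secretly provide a larger replacement, and that independence of $(A\setminus\{v\})\cup\{u\}$ is correctly tracked. Handling isolated vertices and vertices all of whose neighbours are larger (so that forced inclusions propagate) are the fiddly edge cases. I would isolate this bookkeeping in a short lemma-like paragraph and then conclude, as $X$ was arbitrary, that $2^V\subseteq\bigcup_{A\in\mathscr M}[A\setminus Int(A);A\cup Ext(A)]$, which together with the trivial reverse inclusion gives \eqref{cover}.
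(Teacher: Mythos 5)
Your overall strategy is the right one and essentially matches the paper's: given $X\subseteq V$, greedily grow a maximal independent set $B$ in two phases, first over $X$ and then over $V\setminus X$, and then verify $X\setminus B\subseteq Ext(B)$ and $B\setminus X\subseteq Int(B)$. Your first phase is also correct as described: processing $X$ in increasing order guarantees that any discarded $v\in X$ is adjacent to a smaller vertex already placed in $B$, hence $v\in Ext(B)$.

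The genuine gap is the order in the second phase. You propose to add the vertices of $V\setminus X$ ``again in increasing order,'' and you then assert that a newly added $v\notin X$ has no available replacement larger than itself. An increasing sweep does not give you that: at the moment $v$ is added, all of its larger neighbours in $V\setminus X$ are still unexamined and may be perfectly available, in which case $\max Subs(v)>v$ and $v\notin Int(B)$. Concretely, take $G=K_2$ with $V=\{1,2\}$ and $X=\emptyset$: your procedure adds $1$, discards $2$, and returns $B=\{1\}$; but $Subs(1)=\{2\}$, so $1$ is not internally active, $B\setminus Int(B)=\{1\}\not\subseteq X$, and $\emptyset\notin[\{1\};\{1,2\}]$ (the correct generator here is $\{2\}$). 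The paper avoids this by running the second phase in \emph{decreasing} order: every vertex then added from $V\setminus X$ is the greatest one still addable, so any larger neighbour of it was either already rejected because of a conflict with some other vertex of $B$, or lies in $X\setminus B$ and is likewise blocked by an earlier vertex of $B$ --- which is exactly what forces $v\in Int(B)$. With that single change (and the case analysis you defer actually written out), your argument becomes the paper's proof.
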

\begin{proof}
     Since $[A\setminus Int(A); A\cup Ext(A)]\subseteq V$ for all $A\in \mathscr{M}$ it is clear that 
     $$\bigcup_{A \in \mathscr{M}}[A\setminus Int(A); A\cup Ext(A)]\subseteq 2^V.$$
     Now we prove the converse relation. Let $A\subseteq V$, we will construct a maximal independent set $B$ such that 
     $$A\in [B\setminus Int(B); B\cup Ext(B)].$$
     We arrange the vertices $V$ in a sequence $v_1,v_2,\ldots,v_n$ such that the vertices of $A$ come first in increasing order and the vertices of $V\setminus A$ come after in decreasing order.
     
     \begin{equation*}
    \underbrace{\overrightarrow{v_1,v_2,\ldots ,v_{|A|}}}_{A}, \underbrace{\overleftarrow{v_{|A|+1},\ldots,v_{n}}}_{V\setminus A}
    \end{equation*}
     
     We start with the empty set and add the vertices of $V$ in the order they appear in the sequence  if the set remains independent. We set $B^{(0)}=\emptyset$ and for $i=1,\ldots,n$ 
     
     $$B^{(i)}=\begin{cases}
     B^{(i-1)}\cup \{v_i\} &\text{if } B^{(i-1)}\cup \{v_i\} \text{ is independent}\\
     B^{(i-1)} &\text{otherwise}.
     \end{cases}$$
     
     $B^{(n)}$ is an independent dominating set and thus a maximal independent set. Now we put $B=B^{(n)}$.
    Let $v$ be a vertex in $ A\setminus B$. This vertex is not added to $B$ because it is adjacent to one of the vertices added earlier with lower index. Therefore $v$ is an external active vertex of $B$, that is
     
     \begin{equation}\label{eq1}
         A\setminus B=A_e\subseteq Ext(B).
     \end{equation}

    A vertex $v\in B\setminus A$ is added to some $B^{(i)}$, where $|A|\leq i\leq n$, because $B^{(i)}\cup \{v\}$ is independent and $v$ is the first and thus the greatest vertex of $V\setminus A$ which can be added to $B^{(i)}$ while maintaining independence. Therefore the only vertices (if any) which are in $N(v)$  and that are greater than  $v$ are to be searched in $A\setminus B$. Since these vertices cannot substitute $v$ because they are adjacent to some vertices added earlier in $B$ then $v$ is an internally active vertex of $B$, that is
    
     \begin{equation}\label{eq2}
         B\setminus A=A_i\subseteq Int(B).
     \end{equation}
     
     Combining \eqref{eq1} and \eqref{eq2} we have 
     
      \begin{equation}\label{eq3}
         A=(B\setminus A_i)\cup A_e \in [B\setminus Int(B); B\cup Ext(B)].
     \end{equation}
     
\end{proof}

\begin{remark}
    Theorem \ref{covertheorem} works for any linear order on the vertex set.
\end{remark}

The intervals of the cover constructed according to \eqref{cover} depend on the labelling of the vertices $V$ and it turns out that for some labellings some subsets may be generated by more than a maximal independent set. 

\begin{example}
\begin{figure}[H]
%\captionsetup{labelformat=empty,labelsep=none}
\begin{minipage}{.3\textwidth}
\centering
\begin{tikzpicture}[
V/.style = {% V as Vortex
            circle,thick,fill=white},scale=0.7]
%drawing nodes
\node[V] (5) at (0,0) [scale=1,draw] {};
\node[V] (4) at (2,0) [scale=1,draw] {};
\node[V] (1) at (2,2) [scale=1,draw] {};
\node[V] (2) at (0,2) [scale=1,draw] {};
\node[V] (3) at (4,1) [scale=1,draw] {};
%drawing edges
\draw[black](2) to node {} (1)
(2) to node {} (5)
(2) to node {} (4)
(1) to node {} (5)
(1) to node {} (4)
(1) to node {} (3)
(5) to node {} (4)
(4) to node {} (3);
\end{tikzpicture}
\caption{Graph G}
\label{fig:sub0}
\end{minipage}
 ~
\begin{minipage}{.3\textwidth}
\centering
\begin{tikzpicture}[
V/.style = {% V as Vortex
            circle,thick,fill=white},scale=0.7]
%drawing nodes
\node[V] (5) at (0,0) [scale=0.6,draw] {5};
\node[V] (4) at (2,0) [scale=0.6,draw] {4};
\node[V] (1) at (2,2) [scale=0.6,draw] {1};
\node[V] (2) at (0,2) [scale=0.6,draw] {2};
\node[V] (3) at (4,1) [scale=0.6,draw] {3};
%drawing edges
\draw[black](2) to node {} (1)
(2) to node {} (5)
(2) to node {} (4)
(1) to node {} (5)
(1) to node {} (4)
(1) to node {} (3)
(5) to node {} (4)
(4) to node {} (3);
\end{tikzpicture}
\caption{Labelling 1}
\label{fig:sub1}
\end{minipage}
 ~
\begin{minipage}{.3\textwidth}
\centering
\begin{tikzpicture}[
V/.style = {% V as Vortex
            circle,thick,fill=white},scale=0.7]
%drawing nodes
\node[V] (5) at (0,0) [scale=0.6,draw] {5};
\node[V] (4) at (2,0) [scale=0.6,draw] {1};
\node[V] (1) at (2,2) [scale=0.6,draw] {2};
\node[V] (2) at (0,2) [scale=0.6,draw] {4};
\node[V] (3) at (4,1) [scale=0.6,draw] {3};
%drawing edges
\draw[black](2) to node {} (1)
(2) to node {} (5)
(2) to node {} (4)
(1) to node {} (5)
(1) to node {} (4)
(1) to node {} (3)
(5) to node {} (4)
(4) to node {} (3);
\end{tikzpicture}
\caption{Labelling 2}
\label{fig:sub2}
\end{minipage}
\end{figure}

We consider graph $G$ in Figure \ref{fig:sub0} and two different labellings of its vertices as shown in Figure \ref{fig:sub1} and Figure \ref{fig:sub2}. For notation convenience we will write, for example $[1;12345]$ instead of $[\{1\};\{1,2,3,4,5\}]$.

\begin{table}[H]
\begin{threeparttable}
\centering
\begin{tabular}{|p{3.65cm}|p{3.55cm}|p{3.55cm}|p{3.35cm}|}
\hline
Max. ind. set&  Int. active vertices& Ext. active vertices & Generated interval\\
 \hline
 \{1\}&  $\;\emptyset$& \{2,3,4,5\}& [1; 12345]\\
 \{2,3\}&  \{3\}& \{4,5\}& [2; 2345]\\
 \{3,5\}& \{3,5\}&  $\;\emptyset$& [$\emptyset$; 345]\\
 \{4\}& $\;\emptyset$&  \{5\}&  [4; 45]\\
\hline
\end{tabular}
\begin{tablenotes}
    \item
      \begin{flushright}
      2 repeated sets \; \;\;
      \end{flushright}
    \end{tablenotes}
\caption{Cover for labelling 1}
\label{table1}
\end{threeparttable}
\end{table}

\begin{table}[H]
\begin{threeparttable}
\centering
    \begin{tabular}{ |p{3.65cm}|p{3.55cm}|p{3.55cm}|p{3.5cm}| }
    \hline
    Max. ind. set&  Int. active vertices&   Ext. active vertices& Generated interval\\
     \hline
     \{1\}&  $\;\emptyset$& \{2,3,4,5\}& [1; 12345]\\
     \{2\}&  $\;\emptyset$& \{3,4,5\}& [2; 2345]\\
     \{3,4\}& \{3\}&  \{5\}& [4; 345]\\
     \{3,5\}& \{3,5\}&  $\;\emptyset$&  [$\emptyset$; 35]\\
    \hline
    \end{tabular}
    \begin{tablenotes}
    \item
      \begin{flushright}
      0 repeated sets \; \;\;
      \end{flushright}
    \end{tablenotes}
\caption{Cover for labelling 2}
\label{table2}
\end{threeparttable}
\end{table}

In the cover constructed from the Labelling 1, the sets $\{4\}$ and $\{4,5\}$ are generated by maximal independent set $\{3,5\}$ and maximal independent set $\{4\}$. This means that the cover constructed from Labelling 1 is not an interval partition. In the cover constructed form the Labelling 2, all the sets are generated by exactly one maximal independent set, i.e., they belong to exactly one interval. This means that the cover constructed from Labelling 2 is an interval partition.
\end{example}

\begin{definition}\label{extcomplete}
Let $G$ be a graph with vertex set $V=\{1,2\ldots,n\}$. A maximal idependent set $S$ of $G$ is called externally complete if $Ext(S)=S^{\complement}$.
\end{definition}

\begin{theorem}\label{extcompleteth}
Let $G$ be a graph with vertices $V=\{1,\ldots,n\}$. Then $G$ has a unique externally complete set $S$.
\end{theorem}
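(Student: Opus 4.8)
The plan is to pin down the externally complete set explicitly as the output of a greedy procedure, prove it is externally complete, and then show that external completeness forces \emph{exactly} this set.

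\textbf{Existence.} I would build a candidate set $S$ by the greedy rule that scans the vertices in increasing order $1,2,\ldots,n$ and adds each vertex to the current set whenever independence is preserved: set $S^{(0)}=\emptyset$ and, for $i=1,\ldots,n$, put $S^{(i)}=S^{(i-1)}\cup\{i\}$ if $S^{(i-1)}\cup\{i\}$ is independent and $S^{(i)}=S^{(i-1)}$ otherwise; finally $S=S^{(n)}$. (This is exactly the construction used in the proof of Theorem~\ref{covertheorem} specialized to the instance $A=V$, where the sequence $v_1,\ldots,v_n$ is just $1,\ldots,n$.) As there, $S$ is a maximal independent set: it is independent by construction and dominating since any $v\notin S$ was rejected at step $v$ only because it had a neighbour already present. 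That same rejection shows $v$ has a neighbour $a\in S^{(v-1)}\subseteq S$ with $a<v$, hence $v\in Ext(S)$. Since $Ext(S)\subseteq S^{\complement}$ holds for every independent set by Definition~\ref{extactivity}, we get $Ext(S)=S^{\complement}$, i.e.\ $S$ is externally complete.

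\textbf{Uniqueness.} Let $T$ be any externally complete set (hence a maximal independent set); I would show $T=S$ by inspecting the smallest vertex $k$ at which $T$ and $S$ disagree, so that $T\cap\{1,\ldots,k-1\}=S\cap\{1,\ldots,k-1\}$. If $k\in S\setminus T$: external completeness of $T$ yields a neighbour $a$ of $k$ with $a\in T$ and $a<k$; then $a\in T\cap\{1,\ldots,k-1\}=S\cap\{1,\ldots,k-1\}\subseteq S$, contradicting independence of $S$ because $k\in S$ and $a\in N(k)$. If $k\in T\setminus S$: then $k$ was rejected by the greedy procedure at step $k$, so it has a neighbour $a\in S^{(k-1)}\subseteq S$ with $a<k$; then $a\in S\cap\{1,\ldots,k-1\}=T\cap\{1,\ldots,k-1\}\subseteq T$, contradicting independence of $T$ because $k\in T$. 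So no such $k$ exists and $T=S$.

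The argument is essentially routine, so I do not expect a serious obstacle; the only step needing care is the bookkeeping in the uniqueness part — ensuring that \emph{both} directions of the symmetric difference at the minimal disagreement vertex $k$ lead to a witness neighbour that provably lies in the already-agreed prefix $\{1,\ldots,k-1\}$, which is precisely what makes each case collapse to a violation of independence.
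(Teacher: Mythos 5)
Your proof is correct. The existence half coincides with the paper's: your increasing-order greedy scan is exactly what Algorithm~\ref{alg2} does (repeatedly take the smallest remaining vertex and discard its neighbourhood), and the verification that the output is maximal independent and externally complete is the same. The uniqueness half takes a slightly different route. The paper compares two arbitrary externally complete sets $A,B$ symmetrically: it sets $a=\min\{A\setminus B\}$, $b=\min\{B\setminus A\}$, assumes $a<b$, and uses external completeness of $B$ to produce a witness $v\in B\setminus A$ with $v<a$, contradicting the minimality of $b$. You instead anchor the argument to the canonical greedy set $S$ and show any externally complete $T$ must agree with it, splitting the minimal disagreement vertex $k$ into two cases and invoking external completeness of $T$ only in one of them (the other case uses the rejection step of the greedy procedure). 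Both arguments are minimal-counterexample arguments of comparable length; yours has the small added benefit of characterizing the externally complete set explicitly as the output of the greedy scan, while the paper's is self-contained in the sense that it never needs to refer back to the construction. No gaps in either half.
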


\begin{proof}
We will construct the required set $S$ by using Algorithm \ref{alg2}.

\vspace{0.25cm}
\begin{algorithm}[H]
\Input{A graph $G$ with vertex set $V=\{1,\ldots,n\}$}
\Output{An externally complete set $S$ of $G$}
%\SetAlgoLined
\Initialize{$S:=\emptyset$ and $A:=V$}
 \While{While $A\neq \emptyset$}{
         Select the vertex $v\in A$ with the smallest label and add it to $S$\;
         Remove from $A$ the vertex $v$ and its neigbours\;
         }
 \caption{Construction of an externally complete set}
 \label{alg2}
\end{algorithm}

\vspace{0.25cm}
According to Definition \ref{extcomplete}, we need to prove that the output set $S$ of Algorithm \ref{alg2} is a maximal independent set and that it satisfies $Ext(S)=S^{\complement}$. By the structure of the Algorithm \ref{extcomplete} it is evident that $S$ is a maximal independent set. By definition of external activity it is clear that $Ext(S)\subseteq S^{\complement}$. Now we prove that $S^{\complement}\subseteq Ext(S)$. If some vertex $a$ is not in $S$ it means that it is adjacent to some vertex $b\in S$ with smaller label, thus $a \in Ext(S)$. To show uniqueness we assume the opposite, that the graph $G$ has two different externally complete sets $A$ and $B$. Let
    \begin{align*}
      a&=\min \{A\setminus B\}\\
      b&=\min \{B\setminus A\}.
    \end{align*}
Without loss of generality assume that $a<b$. By external completeness of $B$ we have 
    \begin{equation}
       \forall u\in B^{\complement}, \exists v\in B: u\in N(v) \text{ and } u>v.
    \end{equation}
This means that if we put $u=a$ we should find $v\in B$ such that $a\in N(v)$ and $a>v$. Since $a\in A \setminus B$, $a\in N(v)$ and $A$ is independent, then $v$ cannot be in $A$. Thus we have $v\in B\setminus A$ and  $a> v \geq b$, which is a contradiction.
\end{proof}

\begin{definition}\label{intcomplete}
Let $G$ be a graph with vertex set $V=\{1,2\ldots,n\}$. A maximal independent set $S$ of $G$ is called internally complete if $Int(S)=S$.
\end{definition}

%  In the next two theorems we will show that every graph contains an internally complete set and an externally complete set.
 
\begin{theorem}\label{intcompletecons}
Let $G$ be a graph with vertices $V=\{1,\ldots,n\}$. Then $G$ contains an internally complete set $S$.
\end{theorem}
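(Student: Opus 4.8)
The plan is to mimic Algorithm \ref{alg2}, but scanning the vertices by \emph{decreasing} label instead of increasing label. Concretely, I would set $S:=\emptyset$ and $A:=V$, and while $A\neq\emptyset$ pick the vertex $v\in A$ with the \emph{largest} label, add $v$ to $S$, and delete $v$ together with all its neighbours from $A$. The claim is that the resulting set $S$ is internally complete. As in the proof of Theorem \ref{extcompleteth}, $S$ is a maximal independent set: only pairwise non-adjacent vertices are ever placed in $S$ (each chosen vertex has its whole neighbourhood removed from $A$), and when the loop terminates every vertex of $V$ has been deleted, hence lies in $S$ or is adjacent to a vertex of $S$; so $N[S]=V$ and $S$ is a maximal independent set.

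Next I would verify $Int(S)=S$. Fix $v\in S$ and let $u\in Subs(v)$, so $u\in N(v)$ and $(S\setminus\{v\})\cup\{u\}$ is independent; it suffices to prove $u<v$, since then $v>\max\{Subs(v)\}$ whenever $Subs(v)\neq\emptyset$, which together with the case $Subs(v)=\emptyset$ gives internal activity of $v$. Because $u\in N(v)$, the vertex $u$ was deleted from $A$ at some iteration of the loop, say the one in which vertex $w$ was selected; then $u=w$ or $u\in N(w)$. The possibility $u=w$ would put $u\in S$ and, with $u\in N(v)$ and $v\in S$, contradict independence of $S$. The possibility $u\in N(w)$ with $w\neq v$ puts $w\in S\setminus\{v\}$ adjacent to $u$, contradicting independence of $(S\setminus\{v\})\cup\{u\}$. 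Hence $w=v$, so $u$ still belonged to $A$ when $v$ was selected; since $v$ was chosen as the largest-labelled vertex of $A$ at that step, $v\geq u$, and $u\neq v$ because $u\in N(v)$ and $G$ is simple, so $v>u$. As $v\in S$ was arbitrary, this yields $Int(S)=S$, and thus $S$ is internally complete by Definition \ref{intcomplete}.

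The argument is short, and the only point requiring care is the case distinction on why $u$ left $A$: one must notice that the two parts of the hypothesis $u\in Subs(v)$ — namely $u\in N(v)$ and independence of the swapped set — are exactly what rules out ``$u$ itself was added to $S$'' and ``a neighbour $w\neq v$ of $u$ was added to $S$'', leaving only the deletion of $u$ at the very step where $v$ was selected, which forces $v>u$. Unlike the externally complete set of Theorem \ref{extcompleteth}, the internally complete set need not be unique, so no uniqueness statement is claimed or needed here.
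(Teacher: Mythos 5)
Your proposal is correct and follows essentially the same route as the paper: the same greedy algorithm (repeatedly select the largest remaining label and delete its closed neighbourhood), and the same key observation that any $u\in Subs(v)$ must still have been available when $v$ was selected, forcing $v>u$. The only difference is cosmetic — you argue directly that every element of $Subs(v)$ is smaller than $v$, while the paper phrases the same case analysis as a proof by contradiction.
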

\begin{proof}
     We will construct the required set $S$ by using Algorithm \ref{Alg1}.
\vspace{0.25cm}

\begin{algorithm}[H]
\Input{A graph $G$ with vertex set $V=\{1,\ldots,n\}$}
\Output{An internally complete set $S$ of $G$}
\Initialize{$S:=\emptyset$ and $A:=V$}
 \While{While $A\neq \emptyset$}{
         Select the vertex $v\in A$ with the greatest label and add it to $S$\;
        Remove from $A$ the vertex $v$ and its neighbours\;
        }
 \caption{Construction of an internally complete set}
 \label{Alg1}
\end{algorithm}

\vspace{0.5cm}
According to Definition \ref{intcomplete} we need to prove that the output set $S$ of Algorithm \ref{Alg1} satisfies $Int(S)=S$ and that it is a maximal independent set. By the structure of the Algorithm \ref{Alg1} it is evident that $S$ is a maximal independent set. By the definition of internal activity it is clear that $Int(S)\subseteq S$. Now we prove  the converse. Let $a$ be an element of $S$. 
If $Subs(a)=\emptyset$, then $a$ is internally active. Now we consider the case when $Subs(a)\neq \emptyset$. Let $b\in Subs(a)$ such that $a<b$. Then $b\in N(c)$ for some $c$ which has been added to $S$ in some preceding step, otherwise the algorithm would have chosen $b$ instead of $a$. As a result the set 
       \begin{equation*}
            (S\setminus \{a\}) \cup \{b\}
        \end{equation*}
is not independent since $b$ is adjacent to $c\in S\setminus \{a\}$, which is a contradiction.
\end{proof}

\begin{remark}
    Internally complete sets are not unique. There can be constructed small graphs which have more than one internally complete set. 
\end{remark}

\begin{example}

\begin{figure}[H]
    %\captionsetup{labelformat=empty,labelsep=none}
    \begin{minipage}[t]{.3\textwidth}
    \centering
    \begin{tikzpicture}[
    V/.style = {% V as Vortex
                circle,thick,fill=white},scale=0.355]
    %draw-for the border
    %drawing nodes
    \node[V] (1) at (3,3) [scale=0.5,draw] {1};
    \node[V] (2) at (5,1) [scale=0.5,draw] {2};
    \node[V] (3) at (5,5) [scale=0.5,draw] {3};
    \node[V] (4) at (1,5) [scale=0.5,draw] {4};
    \node[V] (5) at (1,1) [scale=0.5,draw] {5};
    %drawing edges
    \draw[black]
    (1) to (2)
    (1) to (3)
    (1) to (4)
    (1) to (5)
    (2) to (3)
    (3) to (4)
    (4) to (5)
    (5) to (2);
    \end{tikzpicture}
    \caption{}
    \label{fig:sub3}
    \end{minipage}
     ~
    \begin{minipage}[t]{.3\textwidth}
    \centering
    \begin{tikzpicture}[
    V/.style = {% V as Vortex
                circle,thick,fill=white},scale=0.65]
    \node[V] (3) at (0,0) [scale=0.6,draw] {3};
    \node[V] (5) at (2,0) [scale=0.6,draw] {5};
    \node[V] (1) at (2,2) [scale=0.6,draw] {1};
    \node[V] (4) at (0,2) [scale=0.6,draw] {4};
    \node[V] (2) at (4,1) [scale=0.6,draw] {2};
    %drawing edges
    \draw[black](2) to node {} (1)
    (2) to node {} (5)
    (3) to node {} (5)
    (1) to node {} (5)
    (1) to node {} (4)
    (1) to node {} (3)
    (5) to node {} (4)
    (4) to node {} (3);
    \end{tikzpicture}
    \caption{}
    \label{fig:sub4}
    \end{minipage}
     ~
    \begin{minipage}[t]{.3\textwidth}
    \centering
    \begin{tikzpicture}[
    V/.style = {% V as Vortex
                circle,thick,fill=white},scale=0.55]
    %draw-for the border
    %drawing nodes
    \node[V] (3) at (-1,0) [scale=0.5,draw] {3};
    \node[V] (4) at (1,0) [scale=0.5,draw] {4};
    \node[V] (5) at (-2,-2) [scale=0.5,draw] {5};
    \node[V] (1) at (2,-2) [scale=0.5,draw] {1};
    \node[V] (2) at (0,-3.5) [scale=0.5,draw] {2};
    %drawing edges
    \draw[black]
    (4) to (3)
    (4) to (1)
    (4) to (5)
    (5) to (2)
    (5) to (3)
    (3) to (1)
    (2) to (1);
    \end{tikzpicture}
    \caption{}
    \label{fig:sub5}
    \end{minipage}
    \end{figure}
    
The graph in Figure \ref{fig:sub3} has the  internally complete sets $\{3,5\}$ and $\{2,4\}$. The graph in Figure \ref{fig:sub4} has the internally complete sets $\{5\}$ and $\{2,4\}$. The graph in Figure \ref{fig:sub5} has the internally complete sets $\{1,5\}$ and $\{2,4\}$.
\end{example}

\begin{definition}
    A maximal independent set $A$ is called \textit{complete} if it is both internally complete and externally complete.
\end{definition}

 \begin{theorem}\label{compl}
 If a graph $G=(V,E)$ has a complete maximal independent set $A$ then it is unique. Moreover, it generates all the subsets of $V$.
 \end{theorem}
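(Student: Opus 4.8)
The plan is to treat the two assertions separately; both follow quickly from the definitions and from Theorem~\ref{extcompleteth}, so this will be a short argument.

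\emph{Uniqueness.} A complete maximal independent set is, by definition, in particular externally complete. Theorem~\ref{extcompleteth} asserts that $G$ possesses exactly one externally complete set. Hence any two complete maximal independent sets of $G$ must coincide, which is the uniqueness claim. The only point here is to notice that external completeness alone already pins the set down, so no separate argument using internal completeness is needed for this half.

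\emph{It generates all subsets.} Recall that the interval generated by an independent set $A$ is $[A\setminus Int(A);\,A\cup Ext(A)]$, so I would simply evaluate the two endpoints. Since $A$ is internally complete, Definition~\ref{intcomplete} gives $Int(A)=A$, hence $A\setminus Int(A)=\emptyset$. Since $A$ is externally complete, Definition~\ref{extcomplete} gives $Ext(A)=A^{\complement}$, hence $A\cup Ext(A)=A\cup A^{\complement}=V$. Therefore the interval generated by $A$ equals $[\emptyset;V]=2^{V}$, i.e.\ $A$ generates every subset of $V$.

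I do not expect any genuine obstacle: the statement is essentially a restatement of the definitions of internal and external completeness together with the uniqueness half of Theorem~\ref{extcompleteth}. If one wished to add more, one could note as a remark that when such an $A$ exists and $G$ has at least one further maximal independent set, the cover of Theorem~\ref{covertheorem} is automatically not an interval partition, since $A$ alone already covers all of $2^{V}$.
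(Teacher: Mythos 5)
Your proof is correct and follows essentially the same route as the paper: uniqueness is deduced from the uniqueness of the externally complete set in Theorem~\ref{extcompleteth}, and the second claim follows by evaluating the interval endpoints $A\setminus Int(A)=\emptyset$ and $A\cup Ext(A)=V$. No issues.
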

 \begin{proof}
    Assume the opposite, that the graph $G$ has another complete maximal independent set $B$. Then $B$ has to be externally complete. As a consequence, we have two different maximal independent sets $A,B$ which are externally complete, a contradiction to Theorem \ref{extcompleteth}. The complete maximal independent set $A$ generates the interval
    \begin{equation}
        [A\setminus Int(A);A\cup Ext(A)]=[\emptyset; V]=2^V.
    \end{equation}
 \end{proof}

 \begin{remark}
    Since the complete maximal independent set  of a graph $G$  generates all the subsets of its vertex set, it is of interest to investigate labelling strategies of the vertices of $G$ which give rise to such sets.
\end{remark}

\begin{example}
The graph in Figure \ref{fig:fig14} has the complete maximal independent set $\{1,4,5,6,8,10\}$ and the graph in Figure \ref{fig:fig15} has the complete maximal independent set $\{1,2,3,7,8,9\}$.
\begin{figure}[H]
\centering
\begin{minipage}[t]{.45\textwidth}
        \centering
 \begin{tikzpicture}[
        V/.style = {% V as Vortex
                    circle,thick,fill=white},scale=0.3]
    
        \node[V] (1) at (7,5) [scale=0.6,draw,fill=black!15] {1};
        \node[V] (2) at (12,6) [scale=0.6,draw] {2};
        \node[V] (3) at (9,7) [scale=0.6,draw] {3};
        \node[V] (4) at (9,9) [scale=0.6,draw,fill=black!15] {4};
        \node[V] (5) at (12,3) [scale=0.6,draw,fill=black!15] {5};
        \node[V] (6) at (7,1) [scale=0.6,draw,fill=black!15] {6};
        \node[V] (7) at (4,3) [scale=0.6,draw] {7};
        \node[V] (8) at (13,8) [scale=0.6,draw,fill=black!15] {8};
        \node[V] (9) at (10,1) [scale=0.6,draw] {9};
        \node[V] (10) at (1,3,) [scale=0.6,draw,fill=black!15] {10};

        %drawing edges
        \draw[black,thick]
        (1) to (2)
        (1) to (3)
        (1) to (7)
        (2) to (5)
        (2) to (8)
        (3) to (4)
        (3) to (5)
        (3) to (8)
        (5) to (9)
        (6) to (7)
        (6) to (9)
        (7) to (10);
    \end{tikzpicture}
\caption{}
\label{fig:fig14}
\end{minipage}
\begin{minipage}[t]{.45\linewidth}
\centering
 \begin{tikzpicture}[
        V/.style = {% V as Vortex
                    circle,thick,fill=white},scale=0.5]% setting width and length as desired 
        
       \node[V] (1) at (1,5) [scale=0.6,draw,fill=black!15] {1};
        \node[V] (2) at (11,7) [scale=0.6,draw,fill=black!15] {2};
        \node[V] (3) at (11,5) [scale=0.6,draw,fill=black!15] {3};
        \node[V] (4) at (11,3) [scale=0.6,draw] {4};
        \node[V] (5) at (9,4) [scale=0.6,draw] {5};
        \node[V] (6) at (3,6) [scale=0.6,draw] {6};
        \node[V] (7) at (6,6) [scale=0.6,draw,fill=black!15] {7};
        \node[V] (8) at (9,2) [scale=0.6,draw,fill=black!15] {8};
        \node[V] (9) at (6,3) [scale=0.6,draw,fill=black!15] {9};
        \node[V] (10) at (3,3) [scale=0.6,draw] {10};

        %drawing edges
        \draw[black,thick]
        (1) to (6)
        (1) to (10)
        (6) to (7)
        (6) to (10)
        (7) to (5)
        (10) to (9)
        (9) to (5)
        (5) to (2)
        (5) to (3)
        (5) to (4)
        (3) to (4)
        (4) to (8);
    \end{tikzpicture}
\caption{}
\label{fig:fig15}
\end{minipage}
\end{figure}   
\end{example}

%\begin{remark}
%Notice that if the graph $G$ in Theorem \ref{extcompleteth} has $m$ edges there can be added  \[{{n}\choose{2}}-{{|S|}\choose{2}}-m\] more edges such that 
%\end{remark}

\begin{remark}
    If for a maximal independent set $A$ we tweak the definition of the external activity as follows
    $$Ext(A)=\{v\in A^{\complement}: \exists a\in A:a\in N(v) \text{ with } v<a\}.$$
    Then for every graph it is possible to construct a unique complete maximal independent set which generates all other vertex subsets. The proof would be basically a mixture of arguments from Theorem \ref{extcompleteth} and Theorem \ref{intcompletecons}.
\end{remark}

\begin{theorem}\label{th4}
   Let  $G$ be a graph with vertex set $V=\{1,\ldots,n\}$, then the following hold:
   \begin{enumerate}
       \item If $G$ has a complete maximal independent set then its cover does not generate a partition.
       \item A graph with two internally complete sets cannot generate a partition.
   \end{enumerate}
\end{theorem}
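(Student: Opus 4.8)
The plan is to prove both parts in the same style: in each case I will exhibit a single vertex subset that lies in the intervals generated by two \emph{distinct} maximal independent sets, which, as noted after Theorem~\ref{covertheorem}, means the cover is not an interval partition.

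For part 1, let $A$ be a complete maximal independent set. By Theorem~\ref{compl} the interval it generates is $[\emptyset;V]=2^V$, so $A$ generates \emph{every} subset of $V$. Hence it suffices to produce a maximal independent set $B\neq A$: since every maximal independent set $M$ lies in its own interval $[M\setminus Int(M);M\cup Ext(M)]$ (because $Int(M)\subseteq M\subseteq M\cup Ext(M)$), the set $B$ is generated both by $B$ and by $A$, so the cover cannot be a partition — and the two intervals are genuinely distinct, since otherwise $B$ would itself be complete, contradicting the uniqueness in Theorem~\ref{compl}. The one step needing care is producing such a $B$: as $G$ is connected with at least two vertices it has an edge $\{u,v\}$, and greedily extending $\{u\}$, respectively $\{v\}$, to maximal independent sets yields $M_u\ni u$ with $v\notin M_u$ and $M_v\ni v$ with $u\notin M_v$; these are distinct, so $|\mathscr{M}|\geq 2$ and some $B\in\mathscr{M}\setminus\{A\}$ exists.

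For part 2, let $A$ and $B$ be two distinct internally complete maximal independent sets. Since $Int(A)=A$ and $Int(B)=B$, the intervals they generate are $[A\setminus Int(A);A\cup Ext(A)]=[\emptyset;A\cup Ext(A)]$ and $[\emptyset;B\cup Ext(B)]$, both of which contain $\emptyset$. Thus the empty set is generated by two distinct maximal independent sets, so the cover is not a partition.

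I expect part 2 to be essentially immediate once one observes that internal completeness drops the bottom of the generated interval to $\emptyset$. The only real obstacle is the bookkeeping in part 1 — not the overlap argument itself, which is trivial given Theorem~\ref{compl}, but checking that a graph carrying a complete maximal independent set still has a second maximal independent set at all, the sole exception (the one-vertex graph) being excluded by $n\geq 2$.
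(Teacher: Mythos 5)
Your proof is correct and follows essentially the same route as the paper: the complete set generates $[\emptyset;V]=2^V$, which meets every other interval, and two internally complete sets both generate intervals containing $\emptyset$. The only difference is that you explicitly verify the existence of a second maximal independent set (via an edge of the connected graph, excluding the one-vertex case), a point the paper's proof leaves implicit when it quantifies over "any maximal independent set $B$"; this is a worthwhile bit of extra care rather than a divergence in method.
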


\begin{proof}
\leavevmode %start new line 
\begin{enumerate}
    \item \label{point1} The complete maximal independent set $A$ of $G$ generates the interval 
     \begin{equation}
        [A\setminus Int(A);A\cup Ext(A)]=[\emptyset,V]=2^V.
    \end{equation}
     For any maximal independent set $B$ we have 
     $$[B\setminus Int(B);B\cup Ext(B)]\cap [\emptyset,V] \neq \emptyset. $$
     Since the intervals are not disjoint the cover \eqref{cover} does not generate a partition.
     \item  Let $A$ and $B$ be two internally complete sets of $G$. Then they generate respectively the intervals
     $$[\emptyset;A\cup Ext(A)] \text{ and } [\emptyset; B\cup Ext(B)].$$
     The empty set is generated twice and therefore the cover is not a partition.
\end{enumerate}
\end{proof}

\begin{definition}
    A maximal independent set $A$ is called internally empty if $Int(A)=\emptyset$.
\end{definition}

\begin{definition}
    A maximal independent set $A$ is called externally empty if $Ext(A)=\emptyset$.
\end{definition}

\begin{proposition}
    Let $G=(V,E)$ be a graph with family of maximal independent sets $\mathscr{M}$. If for some $A\in \mathscr{M}$, $Ext(A)=\emptyset$ then $Int(A)=A$.
\end{proposition}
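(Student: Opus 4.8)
The plan is to argue directly from the definitions by contradiction. Note first that $Int(A)\subseteq A$ always holds by Definition \ref{intactivity} (internally active vertices are by definition elements of $A$), so it suffices to show the reverse inclusion, i.e.\ that every vertex of $A$ is internally active in $A$.

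So I would fix $v\in A$ and suppose, for contradiction, that $v$ is \emph{not} internally active. By Definition \ref{intactivity} this forces $Subs(v)\neq\emptyset$ together with $v<\max\{Subs(v)\}$; in particular there exists some $u\in Subs(v)$ with $u>v$. The key observation is the inclusion
\begin{equation*}
Subs(v)\subseteq N(v)\subseteq V\setminus A:
\end{equation*}
every element of $Subs(v)$ is by definition a neighbour of $v$, and since $A$ is independent and $v\in A$, no neighbour of $v$ can belong to $A$. Hence our $u$ satisfies $u\in A^{\complement}$, $u\in N(v)$ with $v\in A$, and $u>v$ — which is precisely the condition in Definition \ref{extactivity} for $u$ to lie in $Ext(A)$. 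This contradicts the hypothesis $Ext(A)=\emptyset$.

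It follows that every $v\in A$ is internally active, so $A\subseteq Int(A)$, and combined with $Int(A)\subseteq A$ we get $Int(A)=A$. There is no substantial obstacle in this argument; the only point that needs a moment's care is the inclusion $Subs(v)\subseteq V\setminus A$, which is exactly what guarantees that an ``illegitimate'' higher-labelled substitute for $v$ is automatically an externally active vertex. I would also remark that maximality of $A$ is not actually used — the statement holds for any independent set $A$ with $Ext(A)=\emptyset$.
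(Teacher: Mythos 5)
Your proof is correct and follows essentially the same route as the paper's: assume some $v\in A$ is not internally active, extract a substitute $u\in Subs(v)$ with $u>v$, and observe that $u$ is then externally active, contradicting $Ext(A)=\emptyset$. Your added remarks (the explicit inclusion $Subs(v)\subseteq V\setminus A$ and the observation that maximality of $A$ is not needed) are accurate but do not change the argument.
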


\begin{proof}
    Assume the opposite, that there exists  $a\in A$ which is not an internally active vertex. This means that there exists $b\in N(a)$ with $b>a$ such that $(A\setminus \{a\})\cup \{b\}$ is independent. As a consequence, by definition of external activity, $b$ is an external active vertex with respect to $A$, a contradiction. 
\end{proof}

%The converse statement is true for those graphs whose family of maximal independent sets generates a cover.

%\begin{proposition}
    %Let $G$ be a graph with family of maximal independent sets $\mathscr{M}$. If  family $\mathscr{M}$ generates a partition then for all $A\in \mathscr{M}$, $Int(A)=A$ implies $Ext(A)=\emptyset$.
%\end{proposition}
%\begin{proof}
    %Assume the opposite, that there exists a maximal independent set $A$ such that $Int(A)=A$ but $Ext(A)\neq \emptyset$. 
%\end{proof}

\begin{theorem}
    %Let $v$ be a vertex of an ordered graph $G$ that is not contained in an internally complete set. Then there exists a set $A\in \mathscr{M}$ such that $A\setminus Int(A)=\{v\}$.
   Let $G$ be a graph with the family of maximal independent sets $\mathscr{M}(G)=\mathscr{M}$. For any $v\in V$, $\exists   A\in \mathscr{M}$ with $v\in A$ such that either $A\setminus Int(A)=\{v\}$ or $Int(A)=A$. 
\end{theorem}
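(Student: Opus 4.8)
The plan is to adapt the construction used in the proof of Theorem \ref{intcompletecons}, seeding the greedy process with the prescribed vertex $v$. Concretely, I would run the following procedure: initialize $S := \{v\}$ and $A := V \setminus N[v]$; then, while $A \neq \emptyset$, select the vertex of $A$ with the \emph{greatest} label, add it to $S$, and delete it together with its neighbours from $A$. Let $B$ denote the final value of $S$. Since each iteration removes the closed neighbourhood of the vertex just added (and the initialization removes $N[v]$), no two vertices of $B$ are adjacent; and since the loop stops only when $A = \emptyset$, every vertex of $V$ lies in $B$ or has a neighbour in $B$. Hence $B$ is a maximal independent set, and $v \in B$ by construction.

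The heart of the argument is to show that every vertex $a \in B \setminus \{v\}$ is internally active in $B$; this is essentially the proof of Theorem \ref{intcompletecons}, with one extra observation. Suppose some $a \in B \setminus \{v\}$ is not internally active. Then $Subs(a) \neq \emptyset$ and there is $b \in Subs(a)$ with $b > a$, so $(B \setminus \{a\}) \cup \{b\}$ is independent. In particular $b$ is not adjacent to $v \in B \setminus \{a\}$ and $b \neq v$, so $b \notin N[v]$, and therefore $b$ still belonged to $A$ immediately after the initialization. Consider the iteration at which $a$ was selected: if $b$ had still been present in $A$ at that moment, the procedure would have picked $b$ instead of $a$, since $b > a$. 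Hence $b$ had been deleted at an earlier iteration, which means $b \in N(c)$ for some vertex $c$ added to $B$ strictly before $a$; such a $c$ satisfies $c \in B \setminus \{a\}$, so $b$ and $c$ are adjacent vertices of $(B \setminus \{a\}) \cup \{b\}$, contradicting independence. Thus $B \setminus \{v\} \subseteq Int(B)$, and together with $Int(B) \subseteq B$ this forces $Int(B)$ to be either $B$ or $B \setminus \{v\}$.

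It remains to split into cases. If $v \in Int(B)$, then $Int(B) = B$ and the second alternative of the statement holds. If $v \notin Int(B)$, then $Int(B) = B \setminus \{v\}$, hence $B \setminus Int(B) = \{v\}$ and the first alternative holds. In either case $B$ is a maximal independent set containing $v$ of the required form, which proves the theorem.

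I expect the only genuinely delicate point to be the observation that $b \notin N[v]$: one must notice that independence of $(B \setminus \{a\}) \cup \{b\}$ forces $b$ to survive the initial deletion of $N[v]$, so that the ``greatest remaining label'' comparison of Theorem \ref{intcompletecons} applies verbatim to the run of the loop on $V \setminus N[v]$. The rest is a routine transcription of that earlier proof.
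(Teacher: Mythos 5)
Your proof is correct and follows essentially the same route as the paper: seed the greedy greatest-label construction of Theorem~\ref{intcompletecons} with $v$ by first deleting $N[v]$, and observe that in the resulting maximal independent set every vertex other than possibly $v$ is internally active, which forces $Int(B)=B$ or $B\setminus Int(B)=\{v\}$. If anything, your write-up is slightly more careful than the paper's, since by checking $b\notin N[v]$ you explicitly justify that the internal activity of the greedily chosen vertices holds in all of $G$ and not merely in $G[V\setminus N[v]]$.
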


\begin{proof}
    Apply Algorithm \ref{Alg1} to $G\big[V\setminus N[v]\big]$. The set $A=S\cup \{v\}$ is a maximal independent set. Since the set $S$ is internally complete we need to check only if the vertex $v$ is internally active in $A$. There are two cases:
    \begin{enumerate}
        \item $\forall u\in N(v):u<v$. Then $v$ cannot be substituted by a greater neighbour in $A$ since all of them are smaller. Therefore $v$ is internally active in $A$ and $Int(A)=A$.
        \item $\exists u\in N(v):u>v$. Let $B=\{u\in N(v):u>v\}$, that is, the neighbours of $v$ which are greater than $v$. There are two subcases:
        \begin{enumerate}
            \item Every element in $B$ is adjacent to some vertex in $S$. Therefore $v$ can not be substituted by any of its greater neighbours in $A$ while maintaining independence. Thus $Int(A)=A$.
            \item There exists an element $b\in B$ which is not adjacent to any vertex in $S$. This means that $(A\setminus \{v\})\cup \{b\}$ is independent. Therefore according to the definition of internal activity $v$ is the only internally active vertex in $A$ and $A\setminus Int(A)=\{v\}$. 
        \end{enumerate}
    \end{enumerate}
\end{proof}

\begin{proposition}
   Let $G$ be a graph and $\mathscr{M}$ its family of maximal independent sets. If $G$ has a vertex $v$ such that $G\big[V\setminus N[v] \big]$ has isolated vertices then all sets $A\in \mathscr{M}$ which contain $v$ satisfy $Int(A)\neq \emptyset$.
\end{proposition}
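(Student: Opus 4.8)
The plan is to exhibit, in every such $A$, an explicit internally active vertex, namely the isolated vertex itself. Fix a vertex $v$ with the stated property and let $w$ be a vertex that is isolated in $G\big[V\setminus N[v]\big]$. First I would record the crucial structural fact about $w$: since $w\in V\setminus N[v]$ we have $w\neq v$ and $w\notin N(v)$, hence $v\notin N(w)$; and since $w$ has no neighbour inside $V\setminus N[v]$, every neighbour of $w$ lies in $N[v]$. Combining these two observations gives $N(w)\subseteq N(v)$.

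Next I would take an arbitrary $A\in\mathscr{M}$ with $v\in A$ and argue that necessarily $w\in A$. Indeed, $A$ is independent and contains $v$, so $A\cap N(v)=\emptyset$, and therefore $A\cap N(w)=\emptyset$ by the inclusion $N(w)\subseteq N(v)$ just established. If $w$ were not in $A$, then $A\cup\{w\}$ would still be independent, contradicting the maximality of $A$. Hence $w\in A$.

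Finally I would show $w\in Int(A)$, which immediately yields $Int(A)\neq\emptyset$. For any $u\in N(w)$ we have $u\in N(v)$; since $w\neq v$, the vertex $v$ still belongs to $(A\setminus\{w\})\cup\{u\}$, so this set contains the adjacent pair $u,v$ and is not independent. Thus $Subs(w)=\emptyset$, and by the first clause of Definition \ref{intactivity} the vertex $w$ is internally active in $A$, so $w\in Int(A)$.

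There is essentially no hard step here; the only point requiring a little care is the translation of ``$w$ is isolated in $G[V\setminus N[v]]$'' into the inclusion $N(w)\subseteq N(v)$, which hinges on first noting that $w$ itself is not adjacent to $v$. Note also that the argument does not need $N(w)$ to be non-empty: if $N(w)=\emptyset$ then $Subs(w)=\emptyset$ holds vacuously and $w\in Int(A)$ as before, while $w\in A$ still follows from maximality of $A$.
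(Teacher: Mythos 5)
Your proof is correct and follows essentially the same argument as the paper's: the key observations in both are that $N(w)\subseteq N(v)$ forces $w$ into every maximal independent set containing $v$, and that any candidate substitute for $w$ is adjacent to $v$, so $Subs(w)=\emptyset$ and $w\in Int(A)$. The only differences are cosmetic — you argue directly with a single isolated vertex rather than by contradiction with the whole set of them, and you spell out the maximality step ($w\in A$) that the paper states more tersely.
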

\begin{proof}
    Assume the opposite, that there exists $ A\in \mathscr{M}$ which contains $v$ such that $Int(A)=\emptyset$ and let $B$ be the set of isolated vertices in $G\big[V\setminus N[v]\big]$. Since the vertices of $B$ are adjacent only with vertices in $N(v)$ it follows that $B\subsetneq A$.  Moreover, since all the neighbours of $b\in B$ are also neighbours of $v$ we have
    \begin{equation}
        \big(A\setminus\{b\}\big)\cup \{u\} \text{ is not independent }, \; \forall u\in N(b). 
    \end{equation}
    This means that none of the vertices from $B$ can be substituted in $A$ while maintaining independence, therefore $Subs(b)=\emptyset$ for all $b\in B$ and $B\subseteq Int(A)$. As a consequence $Int(A)\neq \emptyset$, a contradiction to our initial assumption. 
\end{proof} 

\begin{proposition}
Let $G=(V,E)$ be a graph and $A,B$ two maximal independent sets of $G$. There exist unique sets $M$ and $N$ with $M\subseteq A, N\subseteq B$ such that $B=(A\setminus M)\cup N$. Moreover, $M\cap N=\emptyset$ and either $N\cap Ext(A)\neq \emptyset$ or $M \cap Ext(B)\neq \emptyset$. %If $G$ is a tree then $M\cap (A\setminus Int(A))\neq \emptyset$.
\end{proposition}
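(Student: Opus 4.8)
The plan is to define $M$ and $N$ by the obvious set-theoretic formulas, check uniqueness, and then derive the activity statement by a minimal-counterexample argument analogous to the uniqueness proof in Theorem~\ref{extcompleteth}. First I would set $M = A\setminus B$ and $N = B\setminus A$. Then $M\subseteq A$, $N\subseteq B$, and $M\cap N=\emptyset$ because $M\subseteq A\setminus B$ is disjoint from $B$ while $N\subseteq B$. Moreover $(A\setminus M)\cup N = (A\cap B)\cup(B\setminus A) = B$, so existence holds. For uniqueness: if $B=(A\setminus M')\cup N'$ with $M'\subseteq A$, $N'\subseteq B$, then every vertex of $M'$ lies in $A$ but is removed, so it is not in $B$, giving $M'\subseteq A\setminus B=M$; conversely any $a\in A\setminus B$ must be removed (it is not in $B$ and the only way to lose an element of $A$ is via $M'$), so $M\subseteq M'$, hence $M'=M$. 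Then $N' = B\setminus(A\setminus M') = B\setminus(A\cap B) = B\setminus A = N$. (One should note $A\neq B$ is needed only for $M,N$ to be nonempty; if $A=B$ the statement about activities is vacuous since there is nothing to prove, or one may simply assume $A\neq B$.)

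The substantive part is the last clause: either $N\cap Ext(A)\neq\emptyset$ or $M\cap Ext(B)\neq\emptyset$. I would argue by contradiction: suppose $N\cap Ext(A)=\emptyset$ and $M\cap Ext(B)=\emptyset$. Since $M=A\setminus B$ and $N=B\setminus A$ are both nonempty (assuming $A\neq B$), pick $a=\min(A\setminus B)=\min M$ and $b=\min(B\setminus A)=\min N$, and without loss of generality assume $a<b$. Now $a\in A$ and $a\notin B$; since $B$ is a maximal independent set and $a\notin B$, the vertex $a$ has a neighbour in $B$. I want to locate such a neighbour with smaller label than $a$, which would put $a\in Ext(B)$, i.e. $a\in M\cap Ext(B)$, contradicting our assumption. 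Suppose instead every neighbour $v$ of $a$ lying in $B$ satisfies $v>a$. Any such $v$ cannot lie in $A$ (as $A$ is independent and $a\in A$), so $v\in B\setminus A=N$, and $v>a$. But then, since $a$ is a neighbour of $v$, $a\in A$, and $a<v$, we would get $v\in Ext(A)$ provided $v\notin A$ — which holds — so $v\in N\cap Ext(A)$, again a contradiction. Thus in either case we reach a contradiction, completing the proof.

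The step I expect to be the main obstacle is making the case analysis on the neighbour of $a$ in $B$ airtight: one must be careful that $a$ genuinely has a neighbour \emph{in} $B$ (this uses maximality of $B$, since $a\notin B$ forces $B\cup\{a\}$ to be dependent, so $a$ is adjacent to some element of $B$), and then that \emph{any} such neighbour falls into exactly one of the two buckets ``smaller label, hence witnesses $a\in Ext(B)$'' or ``larger label and outside $A$, hence witnesses that neighbour $\in Ext(A)$''. The role of the choice $a<b$ is subtle and I would double-check it: it is used to guarantee that the larger neighbour $v$ of $a$, which lies in $N$, really does have label exceeding $a$ so that $v\in Ext(A)$; the minimality of $a$ and $b$ and the assumption $a<b$ together prevent the argument from looping. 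If the direct dichotomy turns out to need the symmetric sub-argument starting from $b$ instead, I would run the mirror version (using external completeness-style reasoning with $b\in B$, $b\notin A$, and a neighbour of $b$ in $A$), and the hypothesis $a<b$ selects which side terminates.
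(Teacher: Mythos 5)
Your proposal is correct and follows essentially the same route as the paper: set $M=A\setminus B$, $N=B\setminus A$, take $a\in M$, use domination by $B$ to find a neighbour of $a$ in $B$ (necessarily in $N$), and split on whether that neighbour is smaller or larger than $a$. The minimal-element choices $a=\min M$, $b=\min N$ and the assumption $a<b$ are superfluous --- the dichotomy already closes for an arbitrary $a\in M$, as the paper's proof shows --- but they do no harm.
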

\begin{proof}
     Put $M=A\setminus B$ and $N=B\setminus A$. We notice immediately that the uniqueness of $M$ and $N$ is guaranteed by their definition. Also, $B=(A\setminus M)\cup N$ and $M\cap N=\emptyset$. Let $a$ be an element of $M$. Since $B$ is maximal independent and therefore dominating 
     \begin{equation}
        N(a)\cap N=\{a_1,\ldots,a_m\}\neq \emptyset, \quad m\leq |N|.
     \end{equation}
    If there exists $a_i$, $i\leq |N|$ such that $a_i>a$, then $a_i\in Ext(A)$. Since $a_i\in N$, it follows that $N\cap Ext(A)\neq \emptyset$. Otherwise, if $a>a_i$ for all $i\leq |N|$, then $a\in Ext(B)$ and therefore $M\cap Ext(B)\neq \emptyset$.
\end{proof}

\section{Special Cases}
In this section we will discuss classes of graphs for which we can generate an interval partition of the Boolean lattice of the vertex set with all maximal independent sets as generators.

\subsection{The Complete Graph}
For the complete graph $K_n$ the cover \eqref{cover} always  generates disjoint intervals.

\begin{theorem}\label{completegraph}
Let $K_n$ be the complete graph with vertices $V=\{1,2,\ldots,n\}$ then the cover constructed according to \eqref{cover} always generates a partition, i.e.,
$$2^V=\bigcupdot_{v \in V}[v\setminus Int(v); v\cup Ext(v)].$$
\end{theorem}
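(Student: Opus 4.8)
The plan is to make everything completely explicit, exploiting the fact that the independence structure of $K_n$ is trivial. First I would note that since any two vertices of $K_n$ are adjacent, the only independent sets are $\emptyset$ and the singletons, so the maximal independent sets are exactly $\{1\},\{2\},\ldots,\{n\}$ and the cover \eqref{cover} consists of precisely $n$ generating intervals, one for each vertex $v$.

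Next I would compute the activities of a singleton $\{v\}$. For external activity: a vertex $u\neq v$ lies in $Ext(\{v\})$ iff $v\in N(u)$ and $u>v$; since $v\in N(u)$ holds for every $u\neq v$ in $K_n$, this gives $Ext(\{v\})=\{u\in V: u>v\}=\{v+1,\ldots,n\}$. For internal activity: with $A=\{v\}$ we have $Subs(v)=\{u\in N(v):\{u\}\text{ is independent}\}=V\setminus\{v\}$, so by Definition \ref{intactivity}, $v$ is internally active iff $v>\max(V\setminus\{v\})$, i.e.\ iff $v=n$. Hence the interval generated by $\{v\}$ is $[\{v\};\{v,v+1,\ldots,n\}]$ when $v<n$, and $[\emptyset;\{n\}]$ when $v=n$.

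Then I would read off exactly which subsets each interval contains. For $v<n$, a set $X$ lies in $[\{v\};\{v,\ldots,n\}]$ iff $v\in X$ and $X\subseteq\{v,\ldots,n\}$, which is equivalent to $\min X=v$; and the interval of $\{n\}$ contains exactly $\emptyset$ and $\{n\}$. Since every $X\subseteq V$ is either empty, equal to $\{n\}$, or has a minimum lying in $\{1,\ldots,n-1\}$, and these three cases are mutually exclusive, each $X$ belongs to exactly one of the $n$ intervals. Combined with Theorem \ref{covertheorem} this yields the claimed partition (in fact the characterization above reproves coverage as well, so no appeal is even needed).

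The only point that requires a little care is the boundary vertex $v=n$, whose interval degenerates to $[\emptyset;\{n\}]$ because $n$ is internally active: one must check that the empty set — which every other interval misses, since those all require $v\in X$ — is captured precisely here, and that $\{n\}$ is not additionally captured by some $[\{v\};\{v,\ldots,n\}]$ with $v<n$. Both are immediate from $\min\{n\}=n$. Beyond this bookkeeping there is no substantive obstacle.
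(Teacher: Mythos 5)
Your proposal is correct and follows essentially the same route as the paper's proof: compute that $\{v\}$ generates $[\{v\};\{v,\ldots,n\}]$ for $v<n$ and $[\emptyset;\{n\}]$ for $v=n$, then observe the intervals are disjoint. Your extra step of characterizing membership via $\min X$ just makes explicit the disjointness that the paper asserts without elaboration.
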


\begin{proof}
    Consider the set $\{i\}$ for $1\leq i\leq n-1$. The vertex $i$ is smaller than and adjacent to vertices $\{i+1,\ldots,n\}$ therefore it is not internally active and generates the interval
     \begin{equation*}
        [\{i\};\{i,i+1\ldots,n\}].
    \end{equation*}
    Now we consider the set $\{n\}$. The vertex $n$ is greater than and adjacent to all other vertices. Therefore it is internally active and no vertex is externally active with respect to this vertex. The generated interval is 
    \begin{equation}\label{setn}
        [\emptyset;\{n\}].
    \end{equation}
    Since intervals are disjoint the cover is a partition. This completes the proof.
\end{proof}

\subsection{The Join of the Complete Graph and the Empty Graph}

\begin{definition}[Graph Join]
    Let $G_1=(V_1,E_1)$ and $G_2=(V_2,E_2)$ be two graphs such that $V_1$ and $V_2$ are disjoint. The join $G_1+G_2$ of the graphs $G_1$ and $G_2$ is the graph $(V_1\cup V_2,E_1\cup E_2)$ together with all the edges formed by connecting all the vertices of $G_1$ with the vertices of $G_2$.
\end{definition}

\begin{theorem}\label{Kn+pending}
    Let $K_n$ be a complete graph on $n$ vertices and $E_m$ an empty graph on $m$ vertices where $V(K_n)=\{1,\ldots,n\}$ and $V(E_m)=\{n+1,\ldots,n+m\}$. The cover of $K_n+E_m$ always generates a partition.
\end{theorem}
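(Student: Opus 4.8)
The plan is to enumerate the maximal independent sets of $G=K_n+E_m$ explicitly, compute the interval each one generates, and then check directly that every $X\subseteq V$ lies in exactly one of these intervals. By Theorem \ref{covertheorem} the intervals already cover $2^V$, so the real work is pairwise disjointness; but it is just as easy to prove the stronger ``exactly one'' statement, which gives both at once.

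First I would describe $\mathscr{M}(G)$. Since $\{1,\dots,n\}$ induces a clique, an independent set contains at most one vertex of $K_n$; and any $i\le n$ is adjacent to every other vertex of $G$ (to the remaining vertices of $K_n$, and by the join to all of $E_m$), so $\{i\}$ is already maximal. An independent set avoiding $\{1,\dots,n\}$ is a subset of the independent set $\{n+1,\dots,n+m\}$, whose only maximal member is $\{n+1,\dots,n+m\}$ itself. Hence
$$\mathscr{M}(G)=\bigl\{\{1\},\dots,\{n\}\bigr\}\cup\bigl\{\{n+1,\dots,n+m\}\bigr\}.$$

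Next I would compute activities. For $\{i\}$ with $i\le n$: every singleton is independent and $i$ is adjacent to everything, so $Subs(i)=V\setminus\{i\}$, whose maximum $n+m$ exceeds $i$; thus $i$ is not internally active and $Int(\{i\})=\emptyset$, while every $v>i$ is adjacent to $i$, so $Ext(\{i\})=\{i+1,\dots,n+m\}$. This yields the interval $[\{i\};\{i,i+1,\dots,n+m\}]$. For $A=\{n+1,\dots,n+m\}$: a vertex $v\in A$ can only be substituted by a neighbour in $\{1,\dots,n\}$, but such a vertex is adjacent to all of $A$, so $(A\setminus\{v\})\cup\{u\}$ is independent only when $A\setminus\{v\}=\emptyset$; in every case $v$ is internally active (either $Subs(v)=\emptyset$, or $m=1$ and $v=n+1>n=\max Subs(v)$), so $Int(A)=A$. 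No vertex of $\{1,\dots,n\}$ can be externally active for $A$ since every element of $A$ exceeds every element of $\{1,\dots,n\}$, so $Ext(A)=\emptyset$ and $A$ generates $[\emptyset;\{n+1,\dots,n+m\}]$.

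Finally I would show these $n+1$ intervals partition $2^V$ by a case split on whether $X\cap\{1,\dots,n\}$ is empty. If it is empty, then $X\subseteq\{n+1,\dots,n+m\}$, so $X\in[\emptyset;\{n+1,\dots,n+m\}]$ and $X$ lies in no interval of the form $[\{i\};\dots]$, all of which force $i\in X$ with $i\le n$. If it is non-empty, set $i=\min\bigl(X\cap\{1,\dots,n\}\bigr)$; then $i\in X$ and every element of $X$ is $\ge i$ (those $\le n$ because $i$ is the minimum of that part, those $>n$ because they exceed $n\ge i$), hence $X\in[\{i\};\{i,\dots,n+m\}]$, and $X$ can belong neither to $[\{j\};\dots]$ for $j\neq i$ (it contains $i<j$) nor to $[\emptyset;\{n+1,\dots,n+m\}]$ (it contains $i\le n$). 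Thus every subset is generated exactly once, which is the assertion. The only mildly delicate point is the $m=1$ bookkeeping for $Int(A)$, but since it does not alter the generated interval the argument is otherwise routine; the substance is the explicit list $\mathscr{M}(G)$ together with the ``take the least clique-vertex in $X$'' minimality argument that pins down the unique interval containing $X$.
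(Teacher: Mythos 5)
Your proposal is correct and follows essentially the same route as the paper: enumerate the maximal independent sets of $K_n+E_m$ (the $n$ clique-singletons and the set $\{n+1,\ldots,n+m\}$), compute the intervals $[\{i\};\{i,\ldots,n+m\}]$ and $[\emptyset;\{n+1,\ldots,n+m\}]$, and conclude that they are pairwise disjoint. You merely spell out two points the paper leaves implicit --- the explicit ``least clique-vertex'' argument for disjointness and the $m=1$ bookkeeping for $Int(\{n+1,\ldots,n+m\})$ --- which is sound.
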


\begin{proof}

\begin{figure}[H]
\centering

 \begin{tikzpicture}[
        V/.style = {circle},M/.style = {circle,inner sep=0pt,minimum size=0pt},scale=1.6]
        
        label={[shift={(1.0,0.3)}]Label}
        
    \draw[black,fill=black!15!] (4.49,5.68) circle (0.85cm);
    \node[M] (1) at (4.35,6.54)[label=above:$1$] {};
    \node[V] (kn) at (4.49,5.68)[fill=black!15!] {$K_n$};
    \node[M] (n) at (4.35,4.82) [label=below:$n$] {};
    \node[V] (n+1) at (7.5,6.54) [scale=0.55,draw,fill=black,label=right:$n+1$] {};
    \node[V] (n+2) at (7.5,5.95) [scale=0.555,draw,fill=black,label=right:$n+2$] {};
    \node[V] (n+m) at (7.5,5) [scale=0.55,draw,fill=black,label=right:$n+m$] {};
        
    \draw[black,thick,dotted]
        (n+1) to (1)
        (n+1) to (n)
        (n+2) to (1)
        (n+2) to (n)
        (n+m) to (1)
        (n+m) to (n);
        \path (n+2) -- (n+m) node [midway, sloped] {$\dots$};
    \end{tikzpicture}
\caption{}
\label{fig6}
\end{figure}

    Let $V_1=V(K_n)=\{1,\ldots,n\}$ and $V_2=V(E_m)=\{n+1,\ldots,n+m\}$.
    Note that by virtue of $G=K_n+E_m$ the graph $G[V_1\cup \{v\}]$ is complete for all $v\in V_2$. There are two kind of maximal independent sets in $K_n+E_m$:
    \begin{enumerate}
        \item The sets $\{1\},\{2\},\ldots,\{n\}$ which generate respectively the intervals 
        \begin{equation*}
            [\{1\};\; \{1,2,\ldots,n+m\}],\ldots,[\{n\};\;\{n,n+1,\ldots,n+m\}].
        \end{equation*}
        
        \item The set $V_2$. Since all the elements of this set are greater than all their neighbours in $K_n+E_m$ then $Int(V_2)=V_2$, $Ext(V_2)=\emptyset$ and the generated interval is $[\emptyset;\;V_2]$.
    \end{enumerate}
    Since the generated intervals are disjoint this ends the proof.
\end{proof}

\begin{remark}
    Theorem \ref{Kn+pending} provides a labelling algorithm which generates a partition for graphs which are join of a complete graph $K_n$ and an empty graph $E_m$: label the vertices of $K_n$ as $1,2,\ldots,n$ and the vertices of $E_m$ as $n+1,\ldots,n+m$.
\end{remark}

\begin{theorem}
   Let $K_n$  be the complete graph on $n$ vertices and let  $S_i$ be the set of vertices which are adjacent only to the vertex $i$ such that for all $v\in S_i$ it holds $v>n$, $i=1,\ldots,n$ (Figure \ref{fig14}). The cover of the graph formed by $K_n$ together with its pendant vertices generates a partition if and only if 
   \begin{enumerate}%[({a}1).]
       \item \label{11} $|S_i|>0$ for $i=1,\ldots,n$ or
       \item \label{22}If there exists some set $B\subseteq V(K_n)$ such that $|S_i|=0$ for $i\in B$ then $n\in B$.
   \end{enumerate}
    
\end{theorem}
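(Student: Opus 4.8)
The plan is to list the maximal independent sets of the graph $H$ formed by $K_n$ together with its pendant sets $S_1,\dots,S_n$, compute the interval each one generates, and then decide pairwise disjointness by tracking the smallest vertex of $K_n$ that a subset contains. Write $B=\{i\in V(K_n):S_i=\emptyset\}$ for the set of vertices of $K_n$ carrying no pendant. Since $K_n$ is complete, an independent set of $H$ meets $V(K_n)$ in at most one vertex: if it contains $i$, it avoids $S_i$ but may take any subset of $\bigcup_{j\ne i}S_j$, so by maximality it equals $M_i:=\{i\}\cup\bigcup_{j\ne i}S_j$; if it misses $V(K_n)$ altogether, maximality forces it to be $\bigcup_j S_j$, which is dominating (hence maximal) exactly when every $S_i$ is non-empty. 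Thus $\mathscr{M}(H)=\{M_1,\dots,M_n\}$, plus the extra set $M_0:=\bigcup_j S_j$ precisely when $B=\emptyset$.

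Next I would compute the activities. One checks that $Ext(M_i)=\{i+1,\dots,n\}\cup S_i$ — a vertex $k\in V(K_n)\setminus\{i\}$ is externally active in $M_i$ iff $k>i$, and every vertex of $S_i$ is, being adjacent only to $i$ and larger than $n$ — so $M_i\cup Ext(M_i)=\{i,\dots,n\}\cup\bigcup_j S_j$. Every pendant of $M_i$ is internally active, since replacing $v\in S_j\subseteq M_i$ by $j$ would put the two $K_n$-vertices $i$ and $j$ into the set. The delicate point — which I expect to be the crux — is the hub vertex $i$: here $Subs(i)=S_i\cup(B\setminus\{i\})$, because a pendant $u\in S_i$ always substitutes for $i$, while a vertex $k\in V(K_n)\setminus\{i\}$ substitutes for $i$ precisely when $k$ itself has no pendant. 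Hence $i$ is internally active iff $\max\bigl(S_i\cup(B\setminus\{i\})\bigr)<i$, which fails exactly when $S_i\ne\emptyset$ or when $B\ne\emptyset$ and $i\ne\max B$. Consequently $M_i$ generates $[\{i\};\,\{i,\dots,n\}\cup\bigcup_j S_j]$ unless $i=\max B$, in which case it generates $[\emptyset;\,\{i,\dots,n\}\cup\bigcup_j S_j]$; and when $B=\emptyset$ one has $Ext(M_0)=\emptyset$ and $Int(M_0)=M_0$, so $M_0$ generates $[\emptyset;\bigcup_j S_j]$.

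So the cover consists of one ground interval $[\emptyset;U]$, where $U=\bigcup_j S_j$ if $B=\emptyset$ and $U=\{b^*,\dots,n\}\cup\bigcup_j S_j$ with $b^*:=\max B$ otherwise, together with intervals $[\{i\};\,\{i,\dots,n\}\cup\bigcup_j S_j]$ for every $i\in V(K_n)$ different from $b^*$. These always cover $2^V$ (a subset disjoint from $V(K_n)$ sits in $[\emptyset;U]$; any other sits in $[\{i\};\dots]$ for $i$ its smallest $K_n$-vertex, or in $[\emptyset;U]$ when that vertex is $b^*$), so only disjointness is at issue — alternatively the covering follows directly from Theorem \ref{covertheorem}. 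Two intervals $[\{i\};\dots]$ and $[\{k\};\dots]$ with $i<k$ are disjoint, since the first requires membership of $i$ while the second lies inside $\{k,\dots,n\}\cup\bigcup_j S_j$, which omits $i$. The ground interval meets $[\{i\};\dots]$ iff $i\in U$; since $i\in V(K_n)$ is never a pendant, this occurs iff $B\ne\emptyset$ and $i>b^*$, and such an $i$ (necessarily $\ne b^*$) exists iff $b^*<n$, i.e.\ iff $n\notin B$. Hence the cover is a partition iff $B=\emptyset$ or $n\in B$, which is precisely the disjunction of conditions (1) and (2); and in the remaining case ($B\ne\emptyset$, $n\notin B$) the singleton $\{n\}$ lies in both $[\{n\};\{n\}\cup\bigcup_j S_j]$ and $[\emptyset;U]$, exhibiting the failure. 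The one step that really needs care is the internal activity of the hub vertices, since it is that computation which singles out the largest bare vertex $b^*$, whose position relative to $n$ governs the whole dichotomy.
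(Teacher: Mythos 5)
Your proof is correct and follows essentially the same route as the paper: enumerate the maximal independent sets $M_i=\{i\}\cup\bigcup_{j\neq i}S_j$ (plus $\bigcup_j S_j$ when every $S_i\neq\emptyset$), compute the generated intervals, observe that exactly the set indexed by $\max B$ (or the all-pendant set) yields the interval with lower endpoint $\emptyset$, and check disjointness, with $\{n\}$ as the doubly-covered witness in the failing case. If anything, your computation of $Subs(i)=S_i\cup(B\setminus\{i\})$ makes explicit the internal-activity step that the paper only asserts.
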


\begin{proof}

\begin{figure}[H]
\centering
\begin{tikzpicture}[
        V/.style = {% V as Vortex
                    circle,thick,fill=white},scale=1.5]
        
        label={[shift={(1.0,0.3)}]Label}
         \draw[black,fill=black!15!] (4,4) circle (1cm);
         \node (kn) at (4,4)[fill=black!15!] {$K_n$};
        %drawing nodes
        \node[V] (i) at (4.5,4.25) [scale=0.3,draw,fill=black,label=above:$i$] {i};
        %changing the position of label
        \node[V] (2) at (6,3.5) [scale=0.3,draw,fill=black] {2};
        \node[V] (3) at (6,4) [scale=0.3,draw,fill=black] {3};
        \node[V] (4) at (6,4.5) [scale=0.3,draw,fill=black] {4};

        %drawing edges
        \draw[black,thick]
        (i) to (2)
        (i) to (3)
        (i) to (4);
        \path (3) -- (2) node [midway, sloped] {$\dots$};
        \node[draw,ellipse,fit=(2)(3)(4),label=right:$S_i$]{};
\end{tikzpicture}
\caption{}
\label{fig14}
\end{figure}

$ $\newline
($\Longleftarrow$)
Let  $V=V(K_n), S=\bigcup\limits_{i=1}^n S_i$ and $A_i=\bigcup_{\substack{j=1 \\ j\neq i}}^n S_j$ for $1\leq i\leq n.$ We prove first that  Conditions \ref{11} and \ref{22} are sufficient for the cover to generate a partition.
    \begin{enumerate}
    \item
         For the first case there are two kinds of maximal independent sets: 
        
        \begin{enumerate}
            \item The sets $A_1\cup\{1\},\ldots,A_n\cup\{n\}$ which generate respectively the intervals
            $$[\{1\};\;V\cup S ],\ldots, [\{n\};\; (V\cup S)\setminus \{1,\ldots,n-1\}].$$
            
            \item The set $S$ which generates the interval $[\emptyset;\; S]$.
        \end{enumerate}
        The intervals are disjoint therefore the cover generates a partition.

    \item
    Let $C=V\setminus B$ . We have that $S_i$ are pendant vertices of $i\in C$ and $|S_{i}|>0$. For the second case there are two kinds of maximal independent sets:
        \begin{enumerate}
            \item \label{2a} The sets $A_i\cup\{i\}$ for $i\in C$ which generate the intervals $[\{i\};\;(V\cup S)\setminus \{1,\ldots,i-1\}].$
            \item \label{2b} The sets $S\cup \{i\}$ for $i\in B$ which generate the intervals $[\{i\};\;(V\cup S)\setminus \{1,\ldots,i-1\}]$
            when $i\neq \max \{B\}=n$. When $i=\max \{B\}=n$ the maximal independent set $S\cup \{i\}$ has internal activity $\emptyset$ since none of its elements can be substituted by any of its neighbours while maintaining independence. So it generates the interval 
            $$[\emptyset;\; (V\cup S)\setminus \{1,\ldots,\max B-1\}].$$
        \end{enumerate}
        The intervals are disjoint, therefore the cover generates a partition.
    \end{enumerate}
$ $\newline
($\Longrightarrow$) Assume that the graph $K_n$ together with pending vertices $S$ has a cover which generates a partition and that the Conditions \ref{11} and \ref{22} of the theorem do not hold. This means that there exists a set $B$ such that for all $i\in B$ we have $|S_i|=0$ but $n\not\in B$, that is, $|S_n|>0$. So the vertex $n$ has pending vertices $S_n$. Let $m=\max \{B\}$, the maximal independent set $S\cup \{m\}$ generates the interval 
    \begin{equation}
        [\emptyset;\; (V\cup S)\setminus \{1,\ldots,m-1\}]
    \end{equation}
which contains the set $\{n\}$ since $m<n$. The maximal independent set $A_n\cup \{n\}$ generates the interval 
    \begin{equation}\label{n-sets}
        [\{n\};\; S\cup \{n\}].
    \end{equation}
which also contains  the set $\{n\}$, a contradiction to the fact that the cover is a partition.
\end{proof}

\subsection{The Lex and Colex Graphs}
\begin{definition}\cite{cutler2011extremal}
    Given $A,B\subset \mathbf{N}$ we say $A$ precedes $B$ in lexicographic (or lex) ordering, written $A{<}_L B$, if $\min\{A\triangle B\}\in A$.
\end{definition}

The lex graph, denoted $L(n,m)$, is the graph with vertex set $[n]=\{1,\ldots,n\}$ and edge set the first $m$ elements of $\binom{[n]}{2}$ under the lex ordering. The first few elements of the lex order on $\binom{[n]}{2}$ are

\begin{equation*}
\{1, 2\} , \{1, 3\} , \{1, 4\} ,\ldots, \{1, n\} , \{2, 3\} , \{2, 4\} ,\ldots , \{2, n\} , \{3, 4\} ,\ldots
\end{equation*}

 \begin{figure}[H]
    \centering
     \begin{tikzpicture}[
            V/.style = {% V as Vortex
                        circle,thick,fill=white},scale=0.4]
            %draw-for the border
            %drawing nodes
            \node[V] (1) at (6,4) [scale=0.5,draw] {1};
            \node[V] (2) at (1,4) [scale=0.5,draw] {2};
            \node[V] (3) at (3,2) [scale=0.5,draw] {3};
            \node[V] (4) at (3,6) [scale=0.5,draw] {4};
            \node[V] (5) at (9,4) [scale=0.5,draw] {5};
            %drawing edges
            \draw[black,very thick]
            (1) to (2)
            (1) to (3)
            (1) to (4)
            (1) to (5)
            (2) to (3)
            (2) to (4);
    \end{tikzpicture}
    \caption{$L(5,6)$}
    \label{L(5,6)}
    \end{figure}
    
The class of the lex graphs is interesting because they are extremal for the number of the independent sets. Among the graphs with $n$ vertices and $m$ edges the lex graph $L(n,m)$  maximizes both, the count of total independent sets and the count of independent sets of fixed size \cite{cutler2011extremal}.
\begin{theorem}\label{exp1lemma}
    Let $m,n$ be two natural numbers such that $n-1\leq m\leq \binom{n}{2} $. Then there exists a unique sequence ${\{p_i\}_{i=1}^k}$ such that
\begin{equation}\label{expansion1}
    \sum \limits_{i=1}^k p_i=m
\end{equation}
where:
    \begin{enumerate}
        \item $p_i>0$ and $1 \leq k \leq n-1$,
        \item $p_i=n-i$ for $i\in\{1,\ldots,k-1\}$,
        \item $p_k\in\{1,\ldots,n-k\}$.
    \end{enumerate}
\end{theorem}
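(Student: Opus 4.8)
The plan is to view the claimed expansion as a greedy (base-$(n-1), (n-2), \ldots$) representation of $m$ and to prove existence and uniqueness separately, by induction on $n$ (or directly on $k$). Geometrically, the sum $\sum_{i=1}^k p_i$ with the constraints of the theorem counts the first $m$ pairs in the lex order on $\binom{[n]}{2}$: all $n-1$ pairs containing $1$, then all $n-2$ remaining pairs containing $2$, and so on, until we take $p_k$ of the pairs containing $k$; so the statement is really the assertion that every $m$ in the range $[n-1,\binom n2]$ is hit exactly once as the cumulative count. I would make this the guiding intuition but give a clean arithmetic argument.

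First I would prove existence. Given $m$ with $n-1 \le m \le \binom{n}{2}$, define $k$ to be the unique index with
\begin{equation*}
    \sum_{i=1}^{k-1}(n-i) < m \le \sum_{i=1}^{k}(n-i);
\end{equation*}
such a $k$ exists and lies in $\{1,\ldots,n-1\}$ because the partial sums $\sum_{i=1}^{j}(n-i)$ are strictly increasing in $j$, equal $n-1$ at $j=1$ and $\binom{n}{2}$ at $j=n-1$, and $m$ falls in that range. Then set $p_i = n-i$ for $i < k$ and $p_k = m - \sum_{i=1}^{k-1}(n-i)$. By the choice of $k$ the left inequality gives $p_k \ge 1$ and the right inequality gives $p_k \le n-k$, so conditions (1)–(3) hold and $\sum p_i = m$ by construction. (One should note the harmless degenerate case $k=1$, where the empty sum is $0$ and $p_1 = m \in \{n-1\} \subseteq \{1,\ldots,n-1\}$, consistent since then $m=n-1$.)

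Next I would prove uniqueness. Suppose $\{p_i\}_{i=1}^{k}$ and $\{q_j\}_{j=1}^{\ell}$ both satisfy (1)–(3) and sum to $m$. By condition (2), the only index at which either sequence can differ from the ``full'' sequence $n-1,n-2,\ldots$ is its last one, so it suffices to show $k=\ell$; then $p_i=q_i$ for $i<k$ forces $p_k=q_k$ from $\sum p_i = \sum q_i = m$. To see $k=\ell$, observe that any such sequence of length $k$ has $\sum_{i=1}^{k} p_i \le \sum_{i=1}^{k}(n-i)$ (since $p_k \le n-k$) and $\sum_{i=1}^{k} p_i \ge \sum_{i=1}^{k-1}(n-i) + 1 = \sum_{i=1}^{k-1}(n-i) + 1$ (since $p_k \ge 1$), so $m$ determines $k$ as the unique index with $\sum_{i=1}^{k-1}(n-i) < m \le \sum_{i=1}^{k}(n-i)$ — exactly the $k$ picked in the existence part. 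Hence $k = \ell$ and the two sequences coincide.

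I do not expect any serious obstacle here; the content is entirely the telescoping/monotonicity of the partial sums $\sum_{i=1}^{j}(n-i) = \tfrac{j(2n-j-1)}{2}$. The only points requiring care are the boundary cases ($k=1$, $m=n-1$ at the bottom and $k=n-1$, $m=\binom n2$ at the top) and making explicit that condition (2) pins down all but the last term, which is what reduces uniqueness to ``$m$ determines $k$''. If one prefers an inductive phrasing: strip off the $n-1$ pairs through vertex $1$ and recurse on $L(n-1,\cdot)$ when $m \ge n-1$ leaves a positive remainder, handling $m \le n-1$ (i.e. $k=1$) as the base case; but the direct argument above is shorter and I would present that.
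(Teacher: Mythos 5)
Your proof is correct and complete, and it is genuinely more detailed than what the paper provides: the paper's entire proof of this theorem is the single sentence that ``the proof can be done by double induction on $m,n$,'' with no argument actually carried out. Your route is a direct, non-inductive one. You observe that the partial sums $\sigma_j=\sum_{i=1}^{j}(n-i)=\tfrac{j(2n-j-1)}{2}$ increase strictly from $\sigma_0=0$ to $\sigma_{n-1}=\binom{n}{2}$, so every $m$ in the stated range lies in exactly one half-open interval $(\sigma_{k-1},\sigma_k]$; this simultaneously yields existence (set $p_i=n-i$ for $i<k$ and $p_k=m-\sigma_{k-1}$, and check $1\le p_k\le n-k$ from the two bracketing inequalities) and uniqueness (any admissible sequence of length $k$ has sum in $(\sigma_{k-1},\sigma_k]$, so $m$ determines $k$, and condition (2) then pins down all terms but the last, which is forced by the total). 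Compared with the paper's advertised double induction, your argument is shorter, self-contained, and makes the combinatorial meaning transparent (it is exactly the greedy count of lex-ordered pairs through vertices $1,\ldots,k$); an inductive phrasing would buy nothing here except perhaps uniformity with Theorem 10 of the paper, whose proof is deferred in the same way. Your handling of the boundary cases $k=1$ (forcing $m=n-1$) and $k=n-1$ is the only place where care is needed, and you address both explicitly.
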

\begin{proof}
The proof can be done by double induction on $m,n$.
\end{proof}

\begin{definition}
    Let $m,n$ be two natural numbers such that $n-1\leq m\leq \binom{n}{2}$. The expansion \eqref{expansion1} satisfying conditions (1)-(3) in Theorem \ref{exp1lemma} is called subsequent decreasing summation decomposition of $m$ with base $n$ or shortly $sds(m,n)$. The number $k$ is called depth of the summation \eqref{expansion1} or the depth of $sds(m,n)$.
\end{definition}

Let $L(n,m)$ be a lex graph on $n$ vertices and $m$ edges such that $m\geq n$. The depth of $sds(m,n)$ is called the depth of the lex graph $L(n,m)$. By Theorem \ref{exp1lemma} such Lex graphs have depth at least $2$. For $1\leq i\leq n$ let $A(i)=\{v\in [n]: v<i\}$ and $B=\{k+1,\ldots,k+p_k\}$ then by virtue of the definition of $L(n,m)$ we have 

$$
N(i)=
\begin{cases}
V\setminus \{i\} &\text{ for } i\in\{1,\ldots,k-1\} ,\\
A(i)\cup B &\text{ for } i=k,\\
A(i) &\text{ for } i\in\{k+1,\ldots,n\}.
\end{cases}
$$

\begin{theorem}
    Let $L(n,m)$ be an arbitrary lex graph. The cover \eqref{cover} generates a partition.
\end{theorem}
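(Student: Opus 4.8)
The plan is to enumerate every maximal independent set of $L(n,m)$ explicitly, read off the interval it generates, and then check that the resulting family of intervals is exactly the ``staircase'' partition of $2^{[n]}$ already exhibited for $K_n$ in Theorem \ref{completegraph}, truncated at the depth $k$ of the lex graph. Write $k$ for the depth of $sds(m,n)$ and $p_k$ for its last summand, and set $U=\{1,\ldots,k-1\}$ (the universal vertices), $M=\{k+1,\ldots,k+p_k\}$ (the neighbours of $k$ larger than $k$), and $F=\{k+p_k+1,\ldots,n\}$ (the vertices adjacent only to $U$). By the neighbourhood description preceding the theorem, the only edges not incident to $U$ are the edges joining $k$ to each vertex of $M$; hence the subgraph induced on $\{k\}\cup M\cup F$ is a star with centre $k$ and leaf set $M$, together with the isolated vertices $F$. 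It follows that the maximal independent sets of $L(n,m)$ are precisely the singletons $\{1\},\ldots,\{k-1\}$, the set $\{k\}\cup F$, and the set $M\cup F=\{k+1,\ldots,n\}$ — that is, $k+1$ sets altogether (the degenerate cases $F=\emptyset$, $p_k=1$, and $k=1$, i.e.\ the star $K_{1,n-1}$, are subsumed in this list and I would simply verify them as they arise).

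Next I would compute the activities of each of these sets. For a universal vertex $j\le k-1$ every larger vertex is a neighbour, so $Ext(\{j\})=\{j+1,\ldots,n\}$; and $Subs(j)=N(j)$ has maximum $n>j$, so $Int(\{j\})=\emptyset$, giving the generated interval $[\{j\};\{j,j+1,\ldots,n\}]$. For $A=\{k\}\cup F$: the neighbours of $k$ larger than $k$ are exactly the vertices of $M$, and no vertex of $F$ has a larger neighbour, so $Ext(A)=M$; each $v\in F$ is adjacent only to $U$, and $U\cup\{k\}$ is never independent, so $Subs(v)=\emptyset$ and $F\subseteq Int(A)$, while $k$ admits the larger substitute $k+p_k$ (or $n$, when $F=\emptyset$), so $k\notin Int(A)$; thus $Int(A)=F$ and the generated interval is $[\{k\};\{k,k+1,\ldots,n\}]$. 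For $A=M\cup F=\{k+1,\ldots,n\}$: no vertex outside $A$ exceeds any vertex of $A$, so $Ext(A)=\emptyset$; and for every $v\in A$ one checks $Subs(v)=\emptyset$, since neither a vertex of $U$ nor the vertex $k$ can be added to $A\setminus\{v\}$ without destroying independence — the one place needing a moment's care is $p_k=1$, where $k$ is a substitute for $k+1$ but is smaller, so $k+1$ is still internally active. Hence $Int(A)=A$ and the generated interval is $[\emptyset;\{k+1,\ldots,n\}]$.

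It then remains to assemble the cover. The generated intervals are
$$[\{1\};\{1,\ldots,n\}],\quad [\{2\};\{2,\ldots,n\}],\quad \ldots,\quad [\{k\};\{k,\ldots,n\}],\quad [\emptyset;\{k+1,\ldots,n\}],$$
and for each $X\subseteq[n]$ exactly one of them contains $X$: if $X=\emptyset$ or $\min X\ge k+1$ then $X$ lies in the last interval and in no other, while if $\min X=j\le k$ then $X\subseteq\{j,\ldots,n\}$ so $X$ lies in the $j$-th interval, and it cannot lie in any $[\{i\};\{i,\ldots,n\}]$ with $i\ne j$ since membership there forces $\min X=i$. Therefore the cover \eqref{cover} is a partition; this is precisely the staircase decomposition of Theorem \ref{completegraph}, which is the special case $k=n-1$. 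The only genuine difficulty I anticipate is obtaining the complete and correct list of maximal independent sets from the neighbourhood data, together with the bookkeeping of the degenerate cases ($F=\emptyset$, $p_k=1$, $k=1$); once that list is in hand, the activity computations and the disjointness check are mechanical.
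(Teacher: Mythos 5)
Your proposal is correct and follows essentially the same route as the paper: enumerate the maximal independent sets of $L(n,m)$ from the $sds(m,n)$ decomposition, compute the interval each one generates, and observe that the resulting intervals form the staircase $[\{1\};[n]],\ldots,[\{k\};\{k,\ldots,n\}],[\emptyset;\{k+1,\ldots,n\}]$ (your activity computations are in fact spelled out more carefully than the paper's, which merely asserts the intervals). The one case the paper treats separately that your degenerate-case list does not explicitly name is $m<n-1$, where the lex graph is a star with isolated vertices; your decomposition with $U=\emptyset$, $M=N(1)$ and $F$ the isolated vertices handles it by the same computation, so this is only a bookkeeping omission, not a gap.
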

\begin{proof}
    There are two cases:
    \begin{enumerate}
    \item
    $m< n-1$. Then the lex graph $L(n,m)$ is composed of the star graph $S_m$ where $1$ is central and the set of isolated vertices $S=\{m+1,\ldots,n\}$. There are two maximal independent sets: the set $\{1\}\cup S$ which generates the interval $[\{1\};\{1,\ldots,n\}]$ and the set $\{2,\ldots,n\}$ which generates the interval $[\emptyset;\{2,\ldots,n\} ]$.
    \item 
    $m\geq n-1$. Let $k$ be the depth of the lex graph $L(n,m)$. There are two cases:
    \begin{enumerate}
        \item \label{a} $p_k\neq n-k$. Then there are three types of maximal independent sets:
        \begin{itemize}
            \item
            The sets $\{i\}$ for $i\in \{1,\ldots,k-1\}$ which generate the intervals $[\{i\};\{i,i+1,\ldots,n\}]$, respectively.
            \item The set $\{k,k+p_k+1,\ldots,n\}$ which generates the interval $[\{k\};\{k,\ldots,n\}]$.
            \item The set $\{k+1,\ldots,n\}$ which generates the interval $[\emptyset; \{k+1,\ldots,n\}]$.
            \end{itemize}
        
        \item $p_k=n-k$. Then the maximal independent sets there are of the first and the third type presented in Case \ref{a}. 
    \end{enumerate}

    \end{enumerate}
    The intervals are disjoint. This ends the proof.
\end{proof}

\begin{example}
We will construct the partition cover of the lex graph $L(5,6)$ in Figure \ref{L(5,6)} which has depth $2$ and $p_2=2$.

\begin{align*}
    \{1\}&: [\{1\}; \{1,2,3,4,5\}\\
    \{2,5\}&: [\{2\}; \{2,3,4,5\}]\\
    \{3,4,5\}&: [\emptyset; \{3,4,5\}].
\end{align*}

\end{example}

\begin{definition}\cite{cutler2011extremal}
    Given $A,B\subset \mathbf{N}$ we say $A$ precedes $B$ in colexicographic (or colex) ordering, written $A{<}_C B$, if $\max\{A\triangle B\}\in B$.
\end{definition}

The colex graph, denoted $C(n,m)$, is the graph with vertex set $[n]$ and edge set the first $m$ elements of $\binom{[n]}{2}$ under the colex ordering. The first few elements of the colex order on $\binom{[n]}{2}$ are
\begin{equation*}
    \{1,2\}, \{1,3\}, \{2,3\}, \{1,4\}, \{2,4\},\{3,4\},\{1,5\},\ldots
\end{equation*}

\begin{figure}[H]
\centering
 \begin{tikzpicture}[
        V/.style = {% V as Vortex
                    circle,thick,fill=white},scale=0.4]
        %draw-for the border
        %drawing nodes
        \node[V] (1) at (5,3) [scale=0.5,draw] {1};
        \node[V] (2) at (0,3) [scale=0.5,draw] {2};
        \node[V] (3) at (2,5) [scale=0.5,draw] {3};
        \node[V] (4) at (2,1) [scale=0.5,draw] {4};
        \node[V] (5) at (8,3) [scale=0.5,draw] {5};
        \node[V] (6) at (6,5) [scale=0.5,draw] {6};
        %drawing edges
        \draw[black,very thick]
        (1) to (2)
        (1) to (3)
        (1) to (4)
        (1) to (5)
        (2) to (3)
        (2) to (4)
        (3) to (4);
\end{tikzpicture}
\caption{$C(6,7)$}
\label{C(6,7)}
\end{figure}

The colex graph $C(n,m)$ has the fewest independent sets among threshold graphs on a given number of vertices and edges \cite{keough2018extremal}.
\begin{theorem}\label{exp2lemma}
    Let $m,n$ be two natural numbers such that $1\leq m \leq \binom{n}{2}$.  There exists a unique sequence ${\{q_i\}_{i=1}^k}$ such that
    
    \begin{equation}\label{expansion2}
    \sum \limits_{i=1}^k q_i=m
    \end{equation}
    where:
    \begin{enumerate}
        \item $q_i>0$ and $1 \leq k \leq n-1$,
        \item $q_i=i$ for $i\in\{1,\ldots,k-1\}$,
        \item $q_k\in\{1,\ldots,k\}$.
    \end{enumerate}
\end{theorem}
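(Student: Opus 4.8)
The plan is to reduce the statement to an elementary counting fact about triangular numbers. Condition (2) forces $q_i=i$ for every $i\in\{1,\ldots,k-1\}$, so once $k$ is fixed the only freedom left is the value of $q_k$, and the sum in \eqref{expansion2} becomes
$$\binom{k}{2}+q_k=m,\qquad 1\le q_k\le k.$$
Hence Theorem \ref{exp2lemma} is equivalent to the claim that there is exactly one $k\in\{1,\ldots,n-1\}$ with $\binom{k}{2}<m\le\binom{k}{2}+k=\binom{k+1}{2}$; the sequence is then recovered by putting $q_i=i$ for $i<k$ and $q_k=m-\binom{k}{2}$.

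For existence I would use that $k\mapsto\binom{k}{2}$ is strictly increasing on $\{1,2,\ldots,n\}$ with $\binom{1}{2}=0$ and $\binom{n}{2}\ge m$. Let $k$ be the largest index in $\{1,\ldots,n-1\}$ with $\binom{k}{2}<m$; this set is nonempty since $\binom{1}{2}=0<1\le m$. If $k=n-1$ then $\binom{k+1}{2}=\binom{n}{2}\ge m$; if $k<n-1$ then maximality of $k$ gives $\binom{k+1}{2}\ge m$. In either case $\binom{k}{2}<m\le\binom{k+1}{2}$, so $q_k:=m-\binom{k}{2}\in\{1,\ldots,k\}$ and the resulting sequence satisfies (1)--(3). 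For uniqueness, suppose $\{q_i\}_{i=1}^{k}$ and $\{q_i'\}_{i=1}^{k'}$ both satisfy (1)--(3) and assume $k\le k'$ without loss of generality. If $k<k'$ then
$$m=\binom{k}{2}+q_k\le\binom{k+1}{2}\le\binom{k'}{2}<\binom{k'}{2}+q_{k'}'=m,$$
a contradiction; hence $k=k'$, and then $q_i=i=q_i'$ for $i<k$ while $q_k=m-\binom{k}{2}=q_k'$, so the two sequences coincide. (Alternatively, one can carry out the double induction on $m$ and $n$ exactly as announced for Theorem \ref{exp1lemma}, peeling off the largest allowable summand.)

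I do not expect a genuine obstacle here; the only place that needs care is the boundary bookkeeping — checking that the forced index $k$ really stays in $\{1,\ldots,n-1\}$ (this is exactly where the hypothesis $m\le\binom{n}{2}$ enters, since $m=\binom{n}{2}+1$ would require $k=n$), and handling the case when $m$ is itself a triangular number $\binom{k+1}{2}$, for which $q_k=k$ is the correct and only choice.
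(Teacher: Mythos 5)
Your argument is correct and complete. The paper itself only states that ``the proof can be done by double induction on $m,n$'' and gives no details, so your proposal actually supplies more than the paper does, and by a different route: instead of an inductive peeling-off argument, you observe that condition (2) collapses the problem to locating $m$ in the unique gap between consecutive triangular numbers, i.e.\ finding the unique $k\in\{1,\ldots,n-1\}$ with $\binom{k}{2}<m\le\binom{k+1}{2}$, after which $q_k=m-\binom{k}{2}$ is forced. This buys you an explicit closed-form description of the depth $k$ and of $q_k$ (which is in fact what the later structural description of $N(i)$ in the colex graph implicitly relies on), whereas the induction sketched in the paper would establish existence and uniqueness without exhibiting the decomposition directly. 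Your boundary checks are the right ones: the hypothesis $m\le\binom{n}{2}$ is exactly what keeps $k\le n-1$, and the case $m=\binom{k+1}{2}$ is correctly handled by $q_k=k$. One tiny presentational point: it is worth stating explicitly that $\sum_{i=1}^{k-1}i=\binom{k}{2}$ is where the binomial coefficient enters, since the theorem is phrased purely in terms of the $q_i$; otherwise nothing is missing.
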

\begin{proof}
The proof can be done by double induction on $m,n$.
\end{proof}

\begin{definition}
    Let $m,n$ be two natural numbers such that $1\leq m \leq \binom{n}{2}$. The expansion \eqref{expansion2} satisfying conditions (1)-(3) in Theorem \ref{exp2lemma} is called subsequent increasing summation decomposition of $m$ with base $n$ or shortly $sis(m,n)$. The number $k$ is called depth of the summation \eqref{expansion2} or the depth of $sis(m,n)$.
\end{definition}

Let $C(n,m)$ be a colex graph on $n$ vertices and $m$ edges. The depth of $sis(m,n)$ is called the depth of the colex graph $C(n,m)$. By virtue of the definition of $C(n,m)$ we have:
$$
N(i)=
\begin{cases}
[k+1]\setminus \{i\} &\text{ for } i\in\{1,\ldots,q_k\},\\
[k]\setminus \{i\} &\text{ for } i\in\{q_k+1,\ldots,k\},\\
\emptyset &\text{ for } i\in\{k+1,\ldots,n\}.
\end{cases}
$$

\begin{theorem}
    Let $C(n,m)$ be an arbitrary colex graph on $n$ vertices and $m$ edges with depth $k$. The cover \eqref{cover} generates a partition.
\end{theorem}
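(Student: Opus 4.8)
The plan is to read off the structure of $C(n,m)$ directly from the displayed description of its neighbourhoods: the vertices $\{1,\ldots,k\}$ induce a clique, the vertex $k+1$ is joined exactly to $\{1,\ldots,q_k\}$, and the vertices of $I:=\{k+2,\ldots,n\}$ are isolated. When $q_k=k$ the graph is simply $K_{k+1}$ together with $n-k-1$ isolated vertices; this is handled by the same method as Theorem \ref{completegraph} (the maximal independent sets are $\{i\}\cup I$ for $1\le i\le k+1$, generating the disjoint intervals $[\{i\};\{i,\ldots,n\}]$ for $i\le k$ and $[\emptyset;\{k+1,\ldots,n\}]$ for $i=k+1$), so I would assume $q_k<k$ from here on. Since $\{1,\ldots,k\}$ is a clique, a maximal independent set meets it in exactly one vertex $i$, it always contains all of $I$, and it is forced to contain $k+1$ precisely when $i>q_k$. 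Hence the maximal independent sets are exactly $A_i=\{i\}\cup I$ for $1\le i\le q_k$ and $A_i=\{i,k+1\}\cup I$ for $q_k<i\le k$.

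Next I would compute the activities of each $A_i$. For external activity the computation is immediate --- isolated vertices contribute nothing, and from $i$ or from $k+1$ one reaches only vertices of $\{1,\ldots,k+1\}$ --- and it yields $Ext(A_i)=\{i+1,\ldots,k+1\}$ when $i\le q_k$ and $Ext(A_i)=\{i+1,\ldots,k\}$ when $q_k<i\le k$. For internal activity, the vertices of $I$ are always internally active since $Subs(v)=\emptyset$ for $v\in I$; the vertex $k+1$, whenever it belongs to $A_i$ (that is, when $q_k<i\le k$), is internally active because every neighbour $u\le q_k$ of $k+1$ is also adjacent to $i$, so $Subs(k+1)=\emptyset$; and the clique vertex $i$ is internally active if and only if $i=k$, because for $i<k$ the vertex $k$ is an admissible substitute of $i$ exceeding $i$ (note $k$ is not adjacent to $k+1$ since $k>q_k$), whereas for $i=k$ every admissible substitute lies in $\{q_k+1,\ldots,k-1\}$ and is smaller than $k$. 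Substituting into $[A\setminus Int(A);A\cup Ext(A)]$ then gives: $A_i$ generates $[\{i\};\{i,i+1,\ldots,n\}]$ for $1\le i\le k-1$, and $A_k$ generates $[\emptyset;\{k,k+1,\ldots,n\}]$.

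Finally I would note that these intervals are pairwise disjoint by a least-element argument: a nonempty set $X$ with $\min X=j<k$ lies in $[\{j\};\{j,\ldots,n\}]$ and in no other interval of the list, while every set with $\min X\ge k$, together with $\emptyset$, lies only in $[\emptyset;\{k,\ldots,n\}]$; and since Theorem \ref{covertheorem} already guarantees that the union of all the intervals equals $2^V$, they form an interval partition. The main obstacle I anticipate is the internal-activity bookkeeping for the sets $A_i=\{i,k+1\}\cup I$ with $q_k<i\le k$ --- precisely, verifying that a neighbour $u$ of $i$ is an admissible substitute exactly when $u>q_k$, i.e. when it is not blocked by the presence of $k+1$, and deducing from this that $A_k$ is the only maximal independent set generating an interval with empty lower endpoint. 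The remainder of the argument parallels the proofs of Theorems \ref{completegraph} and \ref{Kn+pending}.
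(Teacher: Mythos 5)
Your proposal is correct and follows essentially the same route as the paper: enumerate the maximal independent sets of $C(n,m)$ (a clique on $[k]$, vertex $k+1$ attached to $[q_k]$, and isolated vertices), compute the generated intervals, and observe they are $[\{i\};\{i,\ldots,n\}]$ for $i<k$ together with one interval with empty lower endpoint, which are pairwise disjoint. You merely supply the $Subs$/activity verifications and the least-element disjointness argument that the paper leaves implicit, and organize the $q_k=k$ versus $q_k<k$ cases slightly differently; the content is the same.
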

     
\begin{proof}
    Let $S=\{v\in [n]:v>k+1\}$, this is the set of the isolated vertices of $C(n,m)$. We distinguish two cases:
    \begin{enumerate}
        \item \label{1} $q_k\neq k$. In this scenario there are three possible types of maximal independent sets:
    \begin{itemize}
        \item The sets $\{i\}\cup S$ for $i\in \{1,\ldots,q_k\}$ which generate the intervals $[\{i\}; \{i, \ldots,n\}]$.
        \item The sets $\{i,k+1\}\cup S$ for $i\in \{q_k+1,\ldots,k-1\}$ which generate the intervals \allowbreak $[\{i\}; \{i,\ldots,n\}]$.
        \item The set $\{k,k+1\}\cup S$  which generates the interval $[\emptyset,\{k,k+1\}\cup S]$.
    \end{itemize}
    \item $q_k=k$. There are two types of maximal independent sets:
    \begin{itemize}
        \item The sets $\{i\}\cup S$ for $i\in \{1,\ldots,k\}$ which generate the intervals $[\{i\}; \{i,\ldots,n\}]$.
        \item The set $\{k+1\}\cup S$ which generates the interval $[\emptyset; \{k+1\}\cup S]$.
    \end{itemize}
    \end{enumerate}
 The intervals are disjoint. This ends the proof.
\end{proof}

\begin{example}
We will construct the partition cover of the colex graph $C(6,7)$ in Figure \ref{C(6,7)} which has depth $4$ and $q_4=1$.

\begin{align*}
    \{1,6\}&: [\{1\}; \{1,2,3,4,5,6\}]\\
    \{2,5,6\}&: [\{2\}; \{2,3,4,5,6\}\\
    \{3,5,6\}&: [\{3\}; \{3,4,5,6\}]\\
    \{4,5,6\}&: [\emptyset; \{4,5,6\}].
\end{align*}    
\end{example}

\subsection{Pruned Graphs}
 In this section we will deal with rooted trees and graphs which are closely related them. Let $T$ be a tree which is rooted in the vertex $r$, where $deg(r)\geq 2$. The \textit{level} of a node $v$ of the  tree $T$, denoted $l(v)$, is the distance of the vertex $v$ from the vertex $r$ plus $1$, that is, $l(v)=d(v,r)+1$. The \textit{k-level sets} of a tree $T$, denoted $L_k(T)$ or $L_k$ when the tree $T$ is known from the context, are the sets $\{v\in V(T):l(v)=k\}$. As a consequence, the number of levels of $T$, denoted $l(T)$, can be written as 
    \begin{equation}
        l(T)=\max\{k: L_k \text{ is a k-level set of } T\}.
    \end{equation}
    
\begin{remark}
    From now and on we will assume that the trees are rooted in the center (or one of the centers, if there are two). The results hold also in the case when  the trees are rooted in a vertex $r$ such that $deg(r)\geq 2$. 
\end{remark}

Let $u,v$ be vertices of a tree $T$, the vertex $u$ is called \textit{child} of $v$ if $u\in N(v)$ and $l(u)>l(v)$. The set of children of $v$ is denoted $ch(v)$.  Similarly, for $S\subseteq V(T)$, the children of $S$, denoted $ch(S)$ is the union of children of its elements. That is,$$ch(S)=\bigcup_{v\in S}ch(v).$$ 

\begin{definition}
    Let $T$ be a tree on $n$ vertices. We say that $T$ is a pruned tree if every node of every level (except the last one) has at least one leaf child.
\end{definition}

%On this section, for further analysis we will use graph union and graph join operations.

\begin{definition}[Graph Union]
    Let $G_1=(V_1,E_1)$ and $G_2=(V_2,E_2)$ be two graphs. The union $G_1\cup G_2$ of the graphs $G_1$ and $G_2$ is the graph $(V_1\cup V_2,E_1\cup E_2)$.
\end{definition}

Let $L(T)$ be the set of leafs of the tree $T$ and let $v\in V(T)\setminus L(T)$. The graph $K(v)$ 
is the complete graph formed by $v$ and all non-leaf vertices which are on the same level as $v$. By definition, $K(v)=K(u)$ when $l(v)=l(u)$.
Let $T$ be a pruned tree and $H$ be a supergraph of $T$ and a subgraph of

\begin{equation}\label{prunedgraph}
    T\cup \bigcup\limits_{v\in V(T)\setminus L(T)}\Big( (T[v]+\bigcup\limits_{\substack{i\leq l(T)\\i:i-l(v)\geq 2}}L_i)\cup K(v)\Big).
\end{equation}

We call such a graph a \textit{pruned graph} of tree $T$. If we do not allow edges between the non-leaf vertices of the same level then we have an \textit{inter level pruned graph} (Figure \ref{fig:fig13}). All notions mentioned so far for pruned trees can be adapted for pruned graphs.

\begin{figure}[H]
\centering

 \begin{tikzpicture}[
        V/.style = {% V as Vortex
                    circle,thick,fill=white},x=0.4cm,y=2ex]
    
        \node[V] (1) at (9,21) [scale=0.5,draw,fill=black!20] {};
        %L_2
        \node[V] (2) at (13,16) [scale=0.5,draw,fill=black!20] {};
        \node[V] (3) at (11,16) [scale=0.5,draw,fill=black!20] {};
        \node[V] (4) at (9,16) [scale=0.5,draw,fill=black!20] {};
        \node[V] (5) at (7,16) [scale=0.5,draw,fill=black!20] {};
        \node[V] (6) at (5,16) [scale=0.5,draw,fill=black!20] {};
        %L_3
        \node[V] (7) at (15,11) [scale=0.5,draw,fill=black!20] {};
        \node[V] (8) at (13,11) [scale=0.5,draw,fill=black!20] {};
        \node[V] (9) at (11,11) [scale=0.5,draw,fill=black!20] {};
        \node[V] (10) at (10,11) [scale=0.5,draw,fill=black!20] {};
        \node[V] (11) at (9,11) [scale=0.5,draw,fill=black!20] {};
        \node[V] (12) at (8,11) [scale=0.5,draw,fill=black!20] {};
        \node[V] (13) at (7,11) [scale=0.5,draw,fill=black!20] {};
        \node[V] (14) at (5,11) [scale=0.5,draw,fill=black!20] {};
        \node[V] (15) at (3,11) [scale=0.5,draw,fill=black!20] {};
        
        %L_4
        \node[V] (16) at (16,6) [scale=0.5,draw,fill=black!20] {};
        \node[V] (17) at (14,6) [scale=0.5,draw,fill=black!20] {};
        \node[V] (18) at (12,6) [scale=0.5,draw,fill=black!20] {};
        \node[V] (19) at (11,6) [scale=0.5,draw,fill=black!20] {};
        \node[V] (20) at (10,6) [scale=0.5,draw,fill=black!20] {};
        \node[V] (21) at (9,6) [scale=0.5,draw,fill=black!20] {};
        \node[V] (22) at (8,6) [scale=0.5,draw,fill=black!20] {};
        \node[V] (23) at (7,6) [scale=0.5,draw,fill=black!20] {};
        \node[V] (24) at (6,6) [scale=0.5,draw,fill=black!20] {};
        \node[V] (25) at (4,6) [scale=0.5,draw,fill=black!20] {};
        \node[V] (26) at (2,6) [scale=0.5,draw,fill=black!20] {};
        %L_5
        \node[V] (27) at (17,1) [scale=0.5,draw,fill=black!20!] {};
        \node[V] (28) at (15,1) [scale=0.5,draw,fill=black!20!] {};
        \node[V] (29) at (12,1) [scale=0.5,draw,fill=black!20!] {};
        \node[V] (30) at (10,1) [scale=0.5,draw,fill=black!20!] {};
        \node[V] (31) at (8,1) [scale=0.5,draw,fill=black!20!] {};
        \node[V] (32) at (6,1) [scale=0.5,draw,fill=black!20!] {};
        \node[V] (33) at (3,1) [scale=0.5,draw,fill=black!20!] {};
        \node[V] (34) at (1,1) [scale=0.5,draw,fill=black!20!] {};
        \node[V] (35) at (9,1) [scale=0.5,draw,fill=black!20!] {};

        %drawing edges
        \draw[dotted,very thick,black]
        (1) to[bend right=21] (15)
        (1) to[bend left=21] (7)
        (1) to[bend right=19] (14)
        (1) to[bend left=19] (8)
        (1) to[bend right=10]  (13)
        (1) to[bend left=10] (9)
        (1) to[bend right=8]  (12)
        (1) to[bend left=8] (10);
        
         \draw[thick,black]
        (6) to[bend right=21] (26)
        (2) to[bend left=21] (16)
        (6) to[bend right=21] (25)
        (2) to[bend left=21] (17)
        (4) to[bend right=25] (24)
        (4) to[bend left=25] (18);
        
        \draw[dotted,very thick,black]
        (13) to[bend right=22] (32)
        (13) to[bend right=26] (33)
        (15) to[bend right=12] (33)
        (15) to[bend right=20] (32)
        (7) to[bend left=20] (29)
        (9) to[bend left=26] (29)
        (9) to[bend left=26] (28)
        (7) to[bend left=12] (28)
        (13) to[bend left=27] (35)
        (9) to[bend right=27] (35)
        (15) to[bend left=27] (33)
        (7) to[bend right=27] (28)
        (15) to[bend left=18] (32)
        (7) to[bend right=20] (29)
        ;
        
        \draw[black,thick,black]
        (1) to[bend left=22] (2)
        (1) to[bend left=26] (3)
        (1) to (4)
        (1) to[bend right=26] (5)
        (1) to[bend right=22] (6);

        \draw[dotted,very thick,black]
        (6) to (13)
        (6) to (14)
        (6) to (15)
        (4) to (10)
        (4) to (11)
        (4) to (12)
        (2) to (7)
        (2) to (8)
        (2) to (9)
        (13) to[bend right=10] (24)
        (13) to (23)
        (9) to[bend left=10] (18)
        (9) to (19)
        (11) to (20)
        (11) to (21)
        (11) to (22);
        
         \draw[thick,black]
        (7) to[bend left=10] (16)
        (7) to (17)
        (15) to (25)
        (15) to[bend right=10] (26)
        (26) to (33)
        (26) to (34)
        (23) to (31)
        (23) to (32)
        (16) to (27)
        (16) to (28)
        (19) to (29)
        (19) to (30)
        (21) to (35)
        (15) to[bend right=20] (34)
        (7) to[bend left=20] (27);
    
        %\path[fill=red!20,opacity=0.4] (5,16) to[bend right=21] (2,6) to (16,6) to[bend right=21] (13,16) ;
       
         %\path[fill=red!12,opacity=0.4] (3,11) to[bend right=21] (1,1) to (2,6) to[bend left=21] (5,16);
        
          %\path[fill=red!20,opacity=0.4] (9,21) to[bend right=21] (3,11);
          %\path[fill=red!20,opacity=0.4] (1) to[bend right=21] (6);
          %\path[fill=red!20,opacity=0.4] (1) to[bend left=21] (7);
          %\path[fill=red!20,opacity=0.4] (1) to[bend left=22] (2);
          %\path[fill=red!20,opacity=0.4] (7) to[bend left=20] (27);
          %\path[fill=red!20,opacity=0.4] (5,16) to (13,16) to (9,21);
          %\path[fill=red!20,opacity=0.4] (2,6) to (1,1) to (17,1) to (16,6) to (2,6);
    \end{tikzpicture}
\caption{An inter level pruned graph}
\label{fig:fig13}
\end{figure}

The graph in Figure \ref{fig:fig11} is a pruned tree and the graph in Figure \ref{fig:fig12} is one of its pruned graphs.
Let $T$ be a pruned tree with leaf set $L(T)$ and let $H$ be as in \eqref{prunedgraph}. The nodes belonging to the same level have the same color. The colored areas are cliques. By definition, a pruned graph can have a lot of cycles and cliques. Let the set of leafs of $H$ be $L(H)$. By \eqref{prunedgraph}, it is clear that $L(T)$ might be different from $L(H)$. By removing the leaves $L(H)$ from the graph $H$ we form the graph $H-L(H)$. Let the set of independent sets of $H-L(H)$ be $\mathscr{I}$ and the set of maximal independent sets of $H$ be $\mathscr{M}$. We can describe the maximal independent sets of $H$ by the independent sets of $H-L(H)$.

\begin{figure}[H]
\centering
\begin{minipage}[t]{.45\textwidth}
        \centering
 \begin{tikzpicture}[
        V/.style = {% V as Vortex
                    circle,thick,fill=white},x=0.5cm,y=2.5ex]
    
        \node[V] (1) at (6,21) [scale=0.5,draw] {};
        %L_2
        \node[V] (2) at (6,19) [scale=0.5,draw,fill=orange] {};
        \node[V] (3) at (8,19) [scale=0.5,draw,fill=orange] {};
        \node[V] (4) at (4,19) [scale=0.5,draw,fill=orange] {};
        \node[V] (5) at (7,17) [scale=0.5,draw,fill=orange] {};
        \node[V] (6) at (5,17) [scale=0.5,draw,fill=orange] {};
        %L_3
        \node[V] (9) at (10,16) [scale=0.5,draw,fill=green] {};
        \node[V] (12) at (10,15) [scale=0.5,draw,fill=green] {};
        \node[V] (7) at (9,15) [scale=0.5,draw,fill=green] {};
        \node[V] (15) at (9,12) [scale=0.5,draw,fill=green] {};
        \node[V] (13) at (6,14.3) [scale=0.5,draw,fill=green] {};
        \node[V] (14) at (3,12) [scale=0.5,draw,fill=green] {};
        \node[V] (8) at (3,15) [scale=0.5,draw,fill=green] {};
        \node[V] (11) at (2,15) [scale=0.5,draw,fill=green] {};
        \node[V] (10) at (2,16) [scale=0.5,draw,fill=green] {};
        
        %L_4
        \node[V] (23) at (10.5,9) [scale=0.5,draw,fill=black!20] {};
        \node[V] (18) at (9,9) [scale=0.5,draw,fill=red] {};
        \node[V] (19) at (3,9) [scale=0.5,draw,fill=red] {};
        \node[V] (20) at (1.5,9) [scale=0.5,draw,fill=red] {};
        \node[V] (24) at (8,9) [scale=0.5,draw,fill=red] {};
        \node[V] (22) at (9.24,5.2) [scale=0.5,draw,fill=red] {};
        \node[V] (21) at (6,2.84) [scale=0.5,draw,fill=red] {};
        \node[V] (16) at (2.76,5.2) [scale=0.5,draw,fill=red] {};
        \node[V] (17) at (4,9) [scale=0.5,draw,fill=red] {};
        %L_5
        \node[V] (25) at (9.5,7) [scale=0.5,draw,fill=blue!40!] {};
        \node[V] (26) at (9,4) [scale=0.5,draw,fill=blue!40!] {};
        \node[V] (30) at (10,4) [scale=0.5,draw,fill=blue!40!] {};
        \node[V] (28) at (6.5,2) [scale=0.5,draw,fill=blue!40!] {};
        \node[V] (31) at (5.5,2) [scale=0.5,draw,fill=blue!40!] {};
        \node[V] (27) at (2.5,4) [scale=0.5,draw,fill=blue!40!] {};
        \node[V] (29) at (2,7) [scale=0.5,draw,fill=blue!40!] {};

        %drawing edges
        \draw[black,thick]
        (1) to (2)
        (1) to (3)
        (1) to (4)
        (1) to (5)
        (1) to (6);
        
        \draw[black,thick]
        (2) to (11)
        (2) to (12)
        (2) to (8)
        (2) to (13)
        (5) to (9)
        (5) to (14)
        (5) to (15)
        (6) to (7)
        (6) to (8)
        (6) to (10);
        
        \draw[black,thick]
        (7) to (23)
        (7) to (21)
        (15) to (16)
        (15) to (18)
        (15) to (17)
        (8) to (20)
        (8) to (24)
        (14) to (22)
        (14) to (19);
        
        \draw[black,thick]
        (24) to (25)
        (22) to (26)
        (22) to (30)
        (21) to (31)
        (21) to (28)
        (16) to (27)
        (17) to (29);
    \end{tikzpicture}
\caption{}
\label{fig:fig11}
\end{minipage}
  ~
\begin{minipage}[t]{.45\linewidth}
\centering
 \begin{tikzpicture}[
        V/.style = {% V as Vortex
                    circle,thick,fill=white},x=0.5cm,y=2.5ex]% setting width and length as desired 
        
       \node[V] (1) at (6,21) [scale=0.5,draw] {};
        %L_2
        \node[V] (2) at (6,19) [scale=0.5,draw,fill=orange] {};
        \node[V] (3) at (8,19) [scale=0.5,draw,fill=orange] {};
        \node[V] (4) at (4,19) [scale=0.5,draw,fill=orange] {};
        \node[V] (5) at (7,17) [scale=0.5,draw,fill=orange] {};
        \node[V] (6) at (5,17) [scale=0.5,draw,fill=orange] {};
        %L_3
        \node[V] (9) at (10,16) [scale=0.5,draw,fill=green] {};
        \node[V] (12) at (10,15) [scale=0.5,draw,fill=green] {};
        \node[V] (7) at (9,15) [scale=0.5,draw,fill=green] {};
        \node[V] (15) at (9,12) [scale=0.5,draw,fill=green] {};
        \node[V] (13) at (6,14.3) [scale=0.5,draw,fill=green] {};
        \node[V] (14) at (3,12) [scale=0.5,draw,fill=green] {};
        \node[V] (8) at (3,15) [scale=0.5,draw,fill=green] {};
        \node[V] (11) at (2,15) [scale=0.5,draw,fill=green] {};
        \node[V] (10) at (2,16) [scale=0.5,draw,fill=green] {};
        
        %L_4
        \node[V] (23) at (10.5,9) [scale=0.5,draw,fill=red] {};
        \node[V] (18) at (9,9) [scale=0.5,draw,fill=red] {};
        \node[V] (19) at (3,9) [scale=0.5,draw,fill=red] {};
        \node[V] (20) at (1.5,9) [scale=0.5,draw,fill=red] {};
        \node[V] (24) at (8,9) [scale=0.5,draw,fill=red] {};
        \node[V] (22) at (9.24,5.2) [scale=0.5,draw,fill=red] {};
        \node[V] (21) at (6,2.84) [scale=0.5,draw,fill=red] {};
        \node[V] (16) at (2.76,5.2) [scale=0.5,draw,fill=red] {};
        \node[V] (17) at (4,9) [scale=0.5,draw,fill=red] {};
        %L_5
        \node[V] (25) at (9.5,7) [scale=0.5,draw,fill=blue!40!] {};
        \node[V] (26) at (9,4) [scale=0.5,draw,fill=blue!40!] {};
        \node[V] (30) at (10,4) [scale=0.5,draw,fill=blue!40!] {};
        \node[V] (28) at (6.5,2) [scale=0.5,draw,fill=blue!40!] {};
        \node[V] (31) at (5.5,2) [scale=0.5,draw,fill=blue!40!] {};
        \node[V] (27) at (2.5,4) [scale=0.5,draw,fill=blue!40!] {};
        \node[V] (29) at (2,7) [scale=0.5,draw,fill=blue!40!] {};
        
        %shading complete graphs
        \draw[fill=orange!25!,opacity=0.5]  (6,19) -- (7,17) -- (5,17);
        
        \draw[fill=green!15!,opacity=0.5]  (9,15) -- (9,12) -- (3,12)--(3,15);
        
        \draw[fill=red!15!,opacity=0.5]  (8,9) -- (9.24,5.2) -- (6,2.84)--(2.76,5.2)--(4,9);

        %drawing edges
        %L_1 to L_2
        \draw[black,thick]
        (1) to (2)
        (1) to (3)
        (1) to (4)
        (1) to (5)
        (1) to (6);
        
        %L_1 to L3
        \draw[black,dotted,thick]
        (1) to  (8)
        (1) to (10)
        (1) to (7)
        (1) to (9)
        (1) to (12)
        (1) to (13)
        (1) to (14)
        (1) to (15)
        (1) to (11);

        %K_3
        \draw[black,dotted,thick]
        (2) to (5)
        (2) to (6)
        (5) to (6);
        
        %L_2 to L_3
        \draw[black,thick]
        (2) to (11)
        (2) to (12)
        (2) to (8)
        (2) to (13)
        (5) to (9)
        (5) to (14)
        (5) to (15)
        (6) to (7)
        (6) to (8)
        (6) to (10) ;
        
        %K_4
        \draw[black,dotted,thick]
        (7) to (8)
        (7) to (14)
        (7) to (15)
        (8) to (14)
        (8) to (15)
        (14) to (15);
        
        %L_3 to L_4
        \draw[black,thick]
        (7) to (23)
        (7) to (21)
        (15) to (16)
        (15) to (18)
        (15) to (17)
        (8) to (20)
        (8) to (24)
        (14) to (22)
        (14) to (19);
        
        %L_3 to L_5
        \draw[black,dotted,thick]
        (13) to (28)
        (15) to (27)
        (14) to (26)
        (15) [bend left=15] to  (25)
        (14)[bend right=15]to (29)
        (5) to (23);
        
        %K_5
        \draw[black,dotted,thick]
        (16) to (17)
        (16) to (21)
        (16) to (22)
        (16) to (24)
        (17) to (21)
        (17) to (22)
        (17) to (24)
        (21) to (22)
        (21) to (24)
        (22) to (24);

        %L_4 to L_5
        \draw[black,thick]
        (24) to (25)
        (22) to (26)
        (22) to (30)
        (21) to (31)
        (21) to (28)
        (16) to (27)
        (17) to (29);
    \end{tikzpicture}
\caption{}
\label{fig:fig12}
\end{minipage}

\end{figure}

In the following theorem, for notation convenience we will write $L$ instead of $L(H)$.

\begin{theorem}\label{th10}
Let $T$ be a pruned tree on $n$ vertices and $H$ defined as in \eqref{prunedgraph}. The function $f:\mathscr{I}\to \mathscr{M}$ with 

\begin{equation}\label{bijection}
    f(S)=S\cup \Big(  L\cap \big(V\setminus ch(S) \big) \Big)
\end{equation}

is a bijection.
\end{theorem}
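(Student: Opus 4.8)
The plan is to show that $f$ is well-defined (lands in $\mathscr{M}$), injective, and surjective, by exhibiting an explicit inverse map $g:\mathscr{M}\to\mathscr{I}$ given by $g(M)=M\setminus L$, i.e. "forget the leaves." The structural fact I would lean on throughout is that in the pruned graph $H$, every leaf $\ell\in L$ has all of its neighbours among the non-leaf vertices $V\setminus L$ (leaves are pairwise non-adjacent, being leaves of the underlying tree plus the construction \eqref{prunedgraph} only adds edges incident to non-leaf vertices), and conversely every non-leaf vertex of $T$ has at least one leaf child of $H$ — this is essentially the "pruned" hypothesis, adapted to $H$, and I would state it as a preliminary observation before the main argument.

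\textbf{Step 1 (well-definedness).} Given $S\in\mathscr{I}$, an independent set of $H-L$, I must check $f(S)=S\cup\big(L\cap(V\setminus ch(S))\big)$ is a \emph{maximal} independent set of $H$. Independence: $S$ is independent in $H-L$ hence in $H$; the added leaves are mutually non-adjacent and each added leaf $\ell$ lies outside $ch(S)$, so $\ell$ is not a child of any $s\in S$ — and since $\ell$'s only neighbours are non-leaf vertices of which it is a child, $\ell$ is non-adjacent to all of $S$. Maximality: take any $v\notin f(S)$. If $v$ is a leaf, then $v\in ch(S)$, so $v$ has a neighbour in $S\subseteq f(S)$. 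If $v$ is a non-leaf vertex not in $S$, then by the pruned property $v$ has a leaf child $\ell$ in $H$; one checks $\ell\in f(S)$ (it is not a child of anything in $S$ because $v\notin S$ and $\ell$ has the unique non-leaf parent $v$ among potential parents in $S$), so $v$ is adjacent to $\ell\in f(S)$. Hence $f(S)$ is dominating and maximal.

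\textbf{Step 2 ($g\circ f=\mathrm{id}$).} For $S\in\mathscr{I}$, $f(S)\setminus L=S$ because $f(S)=S\cupdot\big(L\cap(V\setminus ch(S))\big)$ is a disjoint union with $S\subseteq V\setminus L$ and the second part contained in $L$. This is immediate from the definitions.

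\textbf{Step 3 ($f\circ g=\mathrm{id}$, the main obstacle).} Let $M\in\mathscr{M}$ and set $S=M\setminus L$. First, $S$ is independent in $H-L$ since $M$ is independent. I must show $M=f(S)=S\cup\big(L\cap(V\setminus ch(S))\big)$. The non-leaf parts agree: $M\cap(V\setminus L)=S$. So it remains to show that for a leaf $\ell\in L$ one has $\ell\in M\iff \ell\notin ch(S)$. For ($\Rightarrow$): if $\ell\in M$ then by independence $\ell$ has no neighbour in $M$, and its neighbours are non-leaf vertices, so $\ell\notin ch(M\cap(V\setminus L))=ch(S)$. For ($\Leftarrow$): if $\ell\notin ch(S)$, suppose $\ell\notin M$; then by maximality of $M$, $\ell$ has a neighbour in $M$, necessarily a non-leaf vertex, hence a vertex of $S$ of which $\ell$ is a child — contradicting $\ell\notin ch(S)$. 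This is the crux: it forces a careful use of the fact that leaves' neighbourhoods sit entirely inside $V\setminus L$, and that "$\ell$ is a child of $v$" is the right notion of adjacency between a leaf and a non-leaf in $H$; I expect the bookkeeping about \emph{which} edges the construction \eqref{prunedgraph} actually introduces among leaves and former-leaves to be the delicate point, so I would isolate the claim "$N_H(\ell)\subseteq V\setminus L$ for all $\ell\in L$" as an explicit lemma and reference it at every step.

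Putting Steps 1–3 together, $g$ is a two-sided inverse of $f$, so $f$ is a bijection. $\blacksquare$
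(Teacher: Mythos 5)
Your overall strategy coincides with the paper's: show $f$ lands in $\mathscr{M}$ and invert it by stripping the leaves. (The paper proves injectivity by intersecting $f(S_1)=f(S_2)$ with the non-leaf vertices and proves surjectivity with exactly your map $g:A\mapsto A\setminus L$; your Step~3 is in fact more complete than the paper's surjectivity argument, which only notes that $A\setminus L$ is independent in $H-L$ and never verifies $f(A\setminus L)=A$.) Steps~2 and~3 are sound once your preliminary lemma $N_H(\ell)\subseteq V\setminus L$ is in place; note, though, that this lemma and the theorem itself only make sense if $L$ is read as the leaf set of the tree $T$ rather than of $H$ (the paper's text sets $L=L(H)$, but its own worked example on Figure~\ref{fig:fig9} uses $L(T)$; a leaf of $T$ can acquire extra edges in $H$ and stop being a leaf of $H$, and with $L=L(H)$ even $f(\emptyset)$ fails to be maximal in that figure).

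The genuine gap is in Step~1, in the parenthetical claim that the leaf child $\ell$ of $v\notin S$ satisfies $\ell\notin ch(S)$ ``because $\ell$ has the unique non-leaf parent $v$.'' The relation ``child'' here must be taken with respect to adjacency in $H$ (otherwise $f(S)$ need not even be independent), and the construction \eqref{prunedgraph} may join a leaf $\ell$ to any non-leaf vertex $u$ two or more levels above it; such a $u$ need not be adjacent to $\ell$'s parent $v$. So one can have $\ell\in ch(S)$ while $v$ has no neighbour in $S$, your witness $\ell$ drops out of $f(S)$, and $v$ may end up undominated. Concretely: let $T$ have root $r$ with children $\ell_r$ (leaf), $a$ and $c$; let $a$ have children $\ell_a$ (leaf) and $b$, let $c$ have the single leaf child $\ell_c$, and let $b$ have the single leaf child $\ell_b$. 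This is a pruned tree, and in the graph \eqref{prunedgraph} the join adds the edge $\{c,\ell_b\}$ (levels $2$ and $4$) but no edge $\{c,b\}$ (levels $2$ and $3$). For $S=\{c\}\in\mathscr{I}$ one gets $ch(S)=\{\ell_c,\ell_b\}$ and $f(S)=\{c,\ell_r,\ell_a\}$, which is not maximal: $b$, whose $H$-neighbours are $a$, $r$ and $\ell_b$, can still be added. So the step does not merely need more care --- well-definedness genuinely fails for some pruned graphs, and an additional hypothesis on which edges of \eqref{prunedgraph} are present is required. The paper's proof disposes of domination in one line (``$v$ should be a child of \dots some $u\in S$'') and glosses over exactly the same point, so this is a defect you inherited rather than introduced; but the claim you flag with ``one checks'' is precisely the one that does not check out.
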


\begin{proof}
     First we prove that the function $f$ is well defined. That is, if $S$ is an independent set of $H-L$ then $f(S)$ is a unique maximal independent set of $H$. The uniqueness of $f(S)$ is evident by the properties of the union operation. The set
     
     $$ L\cap \big(V\setminus ch(S) \big)$$
     
     is independent as a subset of leaves of $H$. On the other hand, none of its members is adjacent to any vertex in $S$. Therefore $f(S)$ is independent as union of two independent sets which have no edge in common. Now we will prove that $f(S)$ is a dominating set in $H$. Let $v\not\in f(S)$, by Eq. \eqref{bijection}, $v\not\in S$ and $v$ should be a child of  and therefore adjacent to some $u\in S\subseteq f(S)$.  Thus, $v\in N(u)\subseteq N(f(S))$.  Since $f(S)$ is also independent it is a maximal independent set of $H$.

     %$ $\newline
    \textbf{Injectivity}. Let $S_1,S_2 \in \mathscr{I}$ such that $S_1\neq S_2$. Assume the opposite,  that $f(S_1)=f(S_2)$. This means that
    
    \begin{equation}
        S_1\cup \Big( L\cap (V\setminus ch(S_1) \big) \Big)=S_2 \cup \Big( L\cap (V\setminus ch(S_2) \big) \Big).
    \end{equation}
    Then we have 
    
    \begin{equation}\label{kot}
        S_1\setminus S_2
        =L\cap \Big( \big(V\setminus ch(S_1)\big) \setminus ch(S_2)\Big).
    \end{equation}
     Which is a contradiction because the LHS of Eq. \eqref{kot}  is a non-empty subsetset of non-leaf vertices of $T$ and the RHS is a subset of leaves of $T$.
     
     %$ $\newline
    \textbf{Surjectivity}.
     Let $A\in \mathscr{M}$. If we remove from $A$ all the leaves then we are left with an independent set of vertices of $H-L$. 
\end{proof}

\begin{definition}
    Let $T$ be a pruned tree on $n$ vertices. We say that $T$ has a level labelling if for all $u,v\in V(T)$ it holds
    \begin{equation}
        l(u)<l(v) \Longrightarrow u<v .
    \end{equation}
\end{definition}

If the pruned tree of a pruned graph has level labelling then we say that a corresponding pruned graph has level labelling as well.

\begin{figure}[H]
    \centering
    \begin{minipage}[t]{.45\textwidth}
        \centering
         \begin{tikzpicture}[
        V/.style = {% V as Vortex
                    circle,thick,fill=white},scale=0.455]
        
        %drawing nodes
        \node[V] (1) at (6,8) [scale=0.555,draw] {1};
        \node[V] (6) at (0,6) [scale=0.555,draw] {6};
        \node[V] (4) at (3,6) [scale=0.555,draw] {4};
        \node[V] (5) at (6,6) [scale=0.555,draw] {5};
        \node[V] (2) at (9,6) [scale=0.555,draw] {2};
        \node[V] (3) at (12,6) [scale=0.555,draw] {3};
        \node[V] (11) at (1.5,4) [scale=0.5,draw] {11};
        \node[V] (9) at (4.5,4) [scale=0.555,draw] {9};
        \node[V] (10) at (9,4) [scale=0.5,draw] {10};
        \node[V] (8) at (12,4) [scale=0.555,draw] {8};
        \node[V] (7) at (15,4) [scale=0.555,draw] {7};
        \node[V] (12) at (4.5,2) [scale=0.5,draw] {12};
        \node[V] (13) at (10.5,2) [scale=0.555,draw] {13};
        \node[V] (14) at (13.5,2) [scale=0.5,draw] {14};

        %drawing edges
        \draw[black,very thick]
        (1) to (2)
        (1) to (3)
        (1) to (4)
        (1) to (5)
        (1) to (6)
        (4) to (11)
        (4) to (9)
        (3) to (10)
        (3) to (8)
        (3) to (7)
        (9) to (12)
        (8) to (13)
        (8) to (14);
    \end{tikzpicture}
    \caption{}
    \label{fig:fig8}
    \end{minipage}
    ~
\begin{minipage}[t]{.45\textwidth}
        \centering
         \begin{tikzpicture}[
        V/.style = {% V as Vortex
                    circle,thick,fill=white},scale=0.455]
        
        %drawing nodes
        \node[V] (1) at (6,8) [scale=0.555,draw] {1};
        \node[V] (6) at (0,6) [scale=0.555,draw] {6};
        \node[V] (4) at (3,6) [scale=0.555,draw] {4};
        \node[V] (5) at (6,6) [scale=0.555,draw] {5};
        \node[V] (2) at (9,6) [scale=0.555,draw] {2};
        \node[V] (3) at (12,6) [scale=0.555,draw] {3};
        \node[V] (11) at (1.5,4) [scale=0.5,draw] {11};
        \node[V] (9) at (4.5,4) [scale=0.555,draw] {9};
        \node[V] (10) at (9,4) [scale=0.5,draw] {10};
        \node[V] (8) at (12,4) [scale=0.555,draw] {8};
        \node[V] (7) at (15,4) [scale=0.555,draw] {7};
        \node[V] (12) at (4.5,2) [scale=0.5,draw] {12};
        \node[V] (13) at (10.5,2) [scale=0.555,draw] {13};
        \node[V] (14) at (13.5,2) [scale=0.5,draw] {14};

        %drawing edges
        \draw[black,very thick]
        (1) to (2)
        (1) to (3)
        (1) to (4)
        (1) to (5)
        (1) to (6)
        (4) to (11)
        (4) to (9)
        (3) to (10)
        (3) to (8)
        (3) to (7)
        (9) to (12)
        (8) to (13)
        (8) to (14);
        
        \draw[black,dotted,thick]
       (1) to (9)
       (1) to (10)
       (4) to (12)
       (3) to (13)
       (3) to (14)
       (1)[bend left=10] to (11)
       (4) to [out=-20,in=-160] (3)
       (9) to [out=-20,in=-160] (8)
       (1) to [bend right=20] (13);
    \end{tikzpicture}
    \caption{}
    \label{fig:fig9}
    \end{minipage}
\end{figure}
The graph in Figure \ref{fig:fig8} is a pruned tree $T$ with level labelling. The graph in Figure \ref{fig:fig9} is a pruned graph of the tree $T$ as defined in \eqref{prunedgraph}.
The following is a property which can be observed on pruned graphs with level labelling.

\begin{theorem}\label{property}
    Let $T$ be a pruned tree on $n$ vertices with level labelling, $H$ a pruned graph of $T$ and $A$ a maximal independent set of $H$. The only internally active vertices of $A$ are those which are leafs in $T$ and any such vertices do not contribute to $Ext(A)$.
    
\end{theorem}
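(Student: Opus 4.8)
The plan is to prove the two assertions separately, and the second one is the easy half, not even needing the internal-activity hypothesis: if $\ell\in A$ is a leaf of $T$ at level $m=l(\ell)$, then by the shape of \eqref{prunedgraph} the only edges at $\ell$ go to its tree-parent (at level $m-1$) and, via the join gadgets $T[w]+\bigcup L_i$, to non-leaf vertices $w$ with $m-l(w)\ge 2$, i.e.\ at levels $\le m-2$; a leaf never sits in any clique $K(w)$ and is never joined to anything at level $\ge m$. As $H$ is a subgraph of \eqref{prunedgraph}, every neighbour of $\ell$ in $H$ lies strictly below level $m$, so by the level labelling every such neighbour has a label smaller than $\ell$. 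Hence no $u\in A^{\complement}\cap N(\ell)$ satisfies $u>\ell$, so $\ell$ witnesses the external activity of no vertex, which is exactly what ``$\ell$ does not contribute to $Ext(A)$'' means.

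For the first assertion I would argue by contraposition: take $v\in A$ that is \emph{not} a leaf of $T$ and show $v\notin Int(A)$. A non-leaf of $T$ is never on the last level, so (as $T$ is pruned) $v$ has a leaf child $\ell$ in $T$, necessarily at level $l(v)+1$, and the level labelling gives $\ell>v$. It then suffices to show $\ell\in Subs(v)$, i.e.\ that $(A\setminus\{v\})\cup\{\ell\}$ is independent: this yields $\max Subs(v)\ge \ell>v$, so $v$ violates both clauses of Definition~\ref{intactivity} and is not internally active.

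Establishing $\ell\in Subs(v)$ is the heart of the matter. By \eqref{prunedgraph} the neighbourhood of the leaf child $\ell$ in $H$ has the form $N_H(\ell)=\{v\}\cup W$ with $W$ a set of non-leaf vertices of $T$ all at levels $\le l(v)-1$; since $v\in A$ and $\ell\notin A$, the claim $\ell\in Subs(v)$ is equivalent to $W\cap A=\emptyset$. Here I would bring in the description of maximal independent sets from Theorem~\ref{th10}: writing $A=f(S)=S\cup\big(L\cap(V\setminus ch(S))\big)$ with $S$ an independent set of $H-L$, one has $v\in S$ (a non-leaf of $T$ is a non-leaf of $H$, so $v\notin L$). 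The task is then to exclude a vertex $w\in W$ that at once lies in $S$, is adjacent in $H$ to $\ell$ and is non-adjacent in $H$ to $v$, by playing the $T$-children of $w$ (which lie at level $l(w)+1\le l(v)$ and, since $T$ is pruned, include a leaf) against the level labelling and the independence of $S$. I expect this to be the main obstacle: it is precisely where the arbitrary extra edges of $H$, the tree structure, and the level labelling all have to be reconciled, and where the tempting shortcut ``$\ell$ is also a leaf of $H$, hence adjacent only to $v$'' fails, because a leaf of $T$ need not remain a leaf of $H$. Only when $v$ happens to retain an $H$-leaf child is the substitute immediate; the real work is the case in which every leaf child of $v$ has picked up an edge to some higher non-leaf, and most of the effort would go into ruling that out.
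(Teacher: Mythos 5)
Your treatment of the second assertion is correct and is essentially the paper's argument: by \eqref{prunedgraph} every neighbour of a leaf $\ell$ of $T$ is either its tree parent or a non-leaf $w$ with $l(\ell)-l(w)\ge 2$, so all neighbours of $\ell$ lie on strictly lower levels, have smaller labels under the level labelling, and $\ell$ can witness no vertex's external activity. (The same observation also yields the reverse inclusion that you do not state but the paper proves and the subsequent theorem needs, namely that every leaf of $T$ lying in $A$ \emph{is} internally active, since $\max Subs(\ell)<\ell$ whenever $Subs(\ell)\neq\emptyset$.) For the first assertion you reduce the problem, exactly as the paper does, to exhibiting a leaf child $\ell$ of the non-leaf $v\in A$ with $\ell\in Subs(v)$; the paper's entire justification of this point is the single sentence that $v$ ``can be substituted by its leaf child with greater label'', with no argument for why $(A\setminus\{v\})\cup\{\ell\}$ is independent. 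So the step you explicitly leave open is precisely the step the paper asserts by fiat.

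Your suspicion about that step is well founded: it cannot be closed in the stated generality. Take the pruned tree $T$ with root $r$ (level $1$) having children a leaf $a$ and non-leaves $w,p$ (level $2$); let $w$ have a leaf child $b$, let $p$ have a leaf child $c$ and a non-leaf child $u$ (level $3$), and let $u$ have a leaf child $\ell$ (level $4$). Use the level labelling $r=1$, $a=2$, $w=3$, $p=4$, $b=5$, $c=6$, $u=7$, $\ell=8$, and let $H$ be $T$ together with the single extra edge $\{3,8\}=\{w,\ell\}$, which \eqref{prunedgraph} permits since $l(\ell)-l(w)=2$. Then $A=\{2,3,6,7\}=\{a,w,c,u\}$ is a maximal independent set of $H$, and $N_H(7)=\{4,8\}$ with $4$ adjacent to $6\in A$ and $8$ adjacent to $3\in A$; hence $Subs(7)=\emptyset$ and the non-leaf $u=7$ is internally active by clause 1 of Definition~\ref{intactivity}. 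The first assertion therefore holds for pruned trees (where $H=T$, $\ell$ is a leaf of $H$, and the substitution is immediate) but fails for arbitrary pruned graphs: as soon as a leaf child of $v$ acquires a gadget edge to a non-leaf two or more levels up that is not itself adjacent to $v$, both can lie in $A$ and block the substitution. Any complete proof must add a hypothesis excluding this configuration; neither your outline nor the paper's own proof can be finished with the theorem as stated.
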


\begin{proof}%\leavevmode %enumerate in proof environment no indent in proof environment

   Let $v\in A$ be a leaf in $T$. Since the only neighbour (neighbors) of $v$ in $H$ has (have) smaller label then $Ext(v)=\emptyset$. Also, since $v$ is greater than all its neighbours in $H$ which can substitute it while maintaining independence then $v$ is internally active in $A$. No non-leaf vertex $u\in A$ can be internally active in $A$ because it can be substituted by its leaf child (children) with greater label. This ends the proof. 
\end{proof}

By combining Theorem \ref{th10} and Theorem \ref{property} we express the non internally active elements of a set $S$ by its pre-image.
\begin{theorem}
    Let $T$ be a pruned tree on $n$ vertices with level labelling, $f$ defined as in Theorem \ref{th10} and $H$ a pruned graph of $T$. Then a maximal independent set $A$ of $H$ generates the interval 
    \begin{equation}
        [f^{-1}(A);\; A\cup Ext(A)].
    \end{equation}
\end{theorem}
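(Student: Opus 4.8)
The plan is to read off the two endpoints of the generated interval separately. By the definition of the interval generated by an independent set, the maximal independent set $A$ of $H$ generates $[A\setminus Int(A);\,A\cup Ext(A)]$, so the whole statement is equivalent to the single set identity $A\setminus Int(A)=f^{-1}(A)$, and nothing further is needed for the right endpoint.

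For the left endpoint I would combine the two preceding results. On one side, Theorem~\ref{property} says that, because $T$ carries a level labelling, the internally active vertices of $A$ are exactly the vertices of $A$ that are leaves of $T$; hence $A\setminus Int(A)$ is precisely the set of non-leaf vertices lying in $A$. On the other side, the surjectivity half of the proof of Theorem~\ref{th10} identifies the preimage of the bijection $f$ over $A$: one deletes from $A$ all of its leaves to land in $H-L$, and one checks from \eqref{bijection} that $f(A\setminus L)=A$ — since $A$ is independent no vertex of $A\cap L$ can be a child of a vertex of $A\setminus L$, so $A\cap L\subseteq L\cap(V\setminus ch(A\setminus L))$, while maximality of $A$ gives the reverse inclusion — so that $f^{-1}(A)=A\setminus L$. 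Matching the two descriptions, the identity $A\setminus Int(A)=f^{-1}(A)$ amounts to the assertion that the leaves of $A$ form the same set whether "leaf" is read inside $T$ or inside $H$.

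That last assertion is where I expect the real work to lie. One inclusion is automatic, since $T\subseteq H$ gives $L=L(H)\subseteq L(T)$ and hence $A\cap L\subseteq A\cap L(T)$; the substance is the converse — that a vertex of $A$ which is a leaf of $T$ is already a leaf of $H$. Here I would exploit the explicit list of extra edges admitted by a pruned graph in \eqref{prunedgraph} (a $T$-leaf that fails to be an $H$-leaf must be adjacent in $H$ to a non-leaf sitting on a strictly earlier level) together with the maximality and independence of $A$ to argue that such a vertex cannot occur inside $A$; this is precisely the point at which the pruned structure and the level labelling have to be used in earnest, and I expect it to be the main obstacle. Once it is in place, substituting $A\setminus Int(A)=f^{-1}(A)$ into $[A\setminus Int(A);\,A\cup Ext(A)]$ finishes the proof, and the auxiliary clause of Theorem~\ref{property} — that those $T$-leaves contribute nothing to $Ext(A)$ — simply reconfirms that the upper endpoint is genuinely $A\cup Ext(A)$ with $Ext(A)$ supported on non-leaf vertices.
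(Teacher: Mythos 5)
Your reduction to the single identity $A\setminus Int(A)=f^{-1}(A)$, the use of Theorem \ref{property} to get $A\setminus Int(A)=A\setminus L(T)$, and the computation $f^{-1}(A)=A\setminus L$ via the surjectivity argument all match the paper's proof in outline. The problem is the step you defer to the end. You read $L$ as $L(H)$ in the degree-one sense and reduce everything to the claim that a vertex of $A$ which is a leaf of $T$ is already a leaf of $H$; that claim is not merely the ``main obstacle'' --- it is false. In Figure \ref{fig:fig9} the $T$-leaf $12$ has degree $2$ in $H$ (it is joined to $4$ by an extra edge of \eqref{prunedgraph}), yet it belongs to the maximal independent set $\{1,7,8,12\}$ of Example \ref{ex5}, whose generated interval has lower endpoint $\{1,8\}$. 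So no amount of exploiting independence and maximality of $A$ will close this gap; worse, under your reading of $L$ Theorem \ref{th10} itself breaks down (with $L=\{2,5,6,7\}$ one gets $f(\{1,8\})=\{1,7,8\}$, which is not even maximal since $12$ is undominated).

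The resolution is that $L$ in \eqref{bijection} must be read as the set of childless vertices of $H$, and this set equals $L(T)$: every extra edge supplied by \eqref{prunedgraph} either joins two non-leaves of $T$ on a common level or joins a non-leaf $v$ to a vertex at least two levels deeper, and in the latter case the deeper endpoint becomes a child of $v$, never the reverse; hence a $T$-leaf acquires no children in $H$, while a $T$-non-leaf keeps its leaf child. With $L=L(T)$ your two descriptions $A\setminus Int(A)=A\setminus L(T)$ and $f^{-1}(A)=A\setminus L$ coincide on the nose and nothing remains to prove. This is also effectively how the paper argues, but in the direction $S\mapsto f(S)$: since $S=f^{-1}(A)$ contains no leaf and Theorem \ref{property} says $Int(A)$ consists exactly of the $T$-leaves of $A$, one gets $Int(A)=L\cap\big(V\setminus ch(S)\big)$ and hence $A\setminus Int(A)=S$ immediately. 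In short, your skeleton is correct, but the final brick you planned to lay does not exist; it must be replaced by the observation that ``leaf of $H$'' has to mean ``childless in $H$,'' which collapses the distinction you were trying to bridge.
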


\begin{proof}
    In order to prove the theorem we will show that $A\setminus Int(A)=f^{-1}(A)$. By Theorem \ref{th10}, there exists a unique $S\in \mathscr{T}$ such that $f(S)=A$, which means
    \begin{equation}
        f(S)=S\cup \bigcup\limits_{v\in L\cap (V\setminus ch(S))}\{v\}=A.
    \end{equation}
    By Theorem \ref{property}, only leafs can be internally active. By construction, $S$ contains no leaf. This implies
    \begin{equation*}
        Int(f(S))= \bigcup\limits_{v\in L\cap (V\setminus ch(S))}\{v\}.
    \end{equation*}
    Therefore 
    \begin{align*}
        f(S)\setminus Int(f(S))=S=A\setminus Int(A).
    \end{align*}
    Since $f(S)=A$, then $S=f^{-1}(A)$ and $A\setminus Int(A)=f^{-1}(A)$.
\end{proof}

Now we will show that the level labelling always generates a partition on pruned graphs.

\begin{theorem}
    If $T=(V,E)$ is a pruned tree with level labelling and $H$ is a pruned graph of $T$. Then the set of intervals
    \begin{equation}\label{interval}
        \{[f^{-1}(A);A\cup Ext(A)], A\in \mathscr{M}(H)\},
    \end{equation}
    is a partition of $2^V$.
\end{theorem}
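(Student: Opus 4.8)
The plan is to get the covering property of \eqref{interval} essentially for free from the earlier results and then to prove that the intervals are pairwise disjoint by an infinite--descent argument powered by Theorem \ref{property}. For the covering part I would only invoke the remark following Theorem \ref{covertheorem}, by which \eqref{cover} holds for the level labelling, together with the theorem immediately preceding this one, by which each generated interval $[A\setminus Int(A);A\cup Ext(A)]$ equals $[f^{-1}(A);A\cup Ext(A)]$; hence the intervals in \eqref{interval} already cover $2^V$, and what remains is to show that if a set $X\subseteq V$ lies in the intervals generated by two maximal independent sets $A$ and $B$ of $H$, then $A=B$.

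Here is the setup. Put $S_A:=f^{-1}(A)$ and $S_B:=f^{-1}(B)$ (well defined since $f$ is a bijection by Theorem \ref{th10}), so $A=f(S_A)$ and $B=f(S_B)$. By the theorem immediately before this one, $f^{-1}(A)=A\setminus Int(A)$, and by Theorem \ref{property} the internally active vertices of $A$ are exactly the leaves of $T$ contained in $A$; hence $S_A$ is precisely the set of non-leaf vertices of $T$ lying in $A$, and symmetrically for $S_B$. In particular $S_A,S_B\subseteq V\setminus L(T)$, and since a non-leaf vertex of $T$ has degree at least $2$ in $T\subseteq H$, no element of $S_A\cup S_B$ is a leaf of $H$; combined with $A=S_A\cup\bigl(L\cap(V\setminus ch(S_A))\bigr)$ this shows that a non-leaf vertex of $T$ lies in $A$ exactly when it lies in $S_A$, and likewise for $B$ and $S_B$. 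From $X\in[S_A;A\cup Ext(A)]\cap[S_B;B\cup Ext(B)]$ I would read off $S_A\cup S_B\subseteq X\subseteq(A\cup Ext(A))\cap(B\cup Ext(B))$. Assuming, for contradiction, that $S_A\neq S_B$, I may fix by symmetry some $v_0\in S_A\setminus S_B$.

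The core is then an alternating descent. Suppose $v_{2k}\in S_A\setminus S_B$ has been constructed (starting from $v_0$). Since $v_{2k}$ is a non-leaf of $T$, it is not a leaf of $H$, and as $v_{2k}\notin S_B$ it follows that $v_{2k}\notin B$; from $v_{2k}\in X\subseteq B\cup Ext(B)$ I conclude $v_{2k}\in Ext(B)$, so there is a vertex $a\in B$ adjacent to $v_{2k}$ in $H$ with $v_{2k}>a$. By Theorem \ref{property} a leaf of $T$ that lies in $B$ has all of its $H$-neighbours smaller than itself, so $a$ is not a leaf of $T$; being a non-leaf of $T$ inside $B$, it must lie in $S_B$. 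Set $v_{2k+1}:=a\in S_B$, so $v_{2k}>v_{2k+1}$; moreover $v_{2k+1}\notin S_A$, since otherwise $S_A$ would contain the $H$-adjacent pair $v_{2k},v_{2k+1}$ and fail to be independent. Running the identical step with the roles of $A$ and $B$ exchanged produces $v_{2k+2}\in S_A\setminus S_B$ with $v_{2k+1}>v_{2k+2}$, and iterating yields an infinite strictly decreasing sequence $v_0>v_1>v_2>\cdots$ in $\{1,\dots,n\}$, which is absurd. Hence $S_A=S_B$ and $A=f(S_A)=f(S_B)=B$, so the intervals in \eqref{interval} are pairwise disjoint; combined with the covering property, they form a partition of $2^V$.

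The main obstacle I anticipate is not the descent itself but the bookkeeping that keeps the whole chain inside the non-leaf vertices of $T$ --- this is exactly what allows membership in $A$ or $B$ to be replaced by membership in $S_A$ or $S_B$, and an $H$-edge to contradict independence. Once Theorem \ref{property} is on hand, so that $Ext(A)$ is ``caused'' only by the non-leaf part of $A$ and that part is itself an independent set, the remainder (including the appeal to finiteness of $V$) is routine.
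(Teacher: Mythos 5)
Your proof is correct and follows essentially the same route as the paper: both arguments use Theorem \ref{property} to reduce external activity to the non-leaf preimages $f^{-1}(A)=A\setminus Int(A)$ and then exploit independence together with the level labelling to produce a contradiction on the symmetric difference. The only cosmetic difference is that the paper compresses your alternating infinite descent into a single step by choosing $a=\min\{f^{-1}(B)\triangle f^{-1}(C)\}$ at the outset and showing $a$ must exceed another element of the symmetric difference.
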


\begin{proof}
    We need to show that the intervals \eqref{interval} are disjoint for different sets. Assume the opposite, that there exist $B,C\in \mathscr{M}$ and a nonempty set $D$ such that 
    \begin{equation}
    D\in [f^{-1}(B);B\cup Ext(B)] \text{ and } D\in[f^{-1}(C);C\cup Ext(C)].
    \end{equation}
    This means that $M=f^{-1}(B)\cup f^{-1}(C)\subset D$. Let $N=f^{-1}(B)\cap f^{-1}(C)$ and let 
    \begin{equation}\label{elementa}
        a=\min \big\{M\setminus N\big\}=\min\big\{f^{-1}(B)\triangle f^{-1}(C)\big\},
    \end{equation}
    that is, the smallest among elements which are in either $f^{-1}(B)$ or $f^{-1}(C)$ and not in their intersection.
    Without loss of generality assume $a\in f^{-1}(B)$. Consequently, $a\not\in f^{-1}(C)$. Since $a\in D$ and $D$ is contained in the interval generated by the set $C$ then $a\in Ext(C)$. By Theorem \ref{property}, only non-internally active vertices of $C$ contribute to $Ext(C)$, therefore $a\in Ext(f^{-1}(C))$. Moreover, since $a\in f^{-1}(B)$ and $f^{-1}(B)$ is an independent set we have
    
    \begin{equation}
        a\in Ext(f^{-1}(C)\setminus N).
    \end{equation}
    Which means that $a$ is adjacent to and greater than an element of $f^{-1}(C)\setminus N\subseteq M\setminus N$, a contradiction to \eqref{elementa}.
\end{proof}

\begin{remark}
    It is evident that Theorems 7-10 hold also in the special case of pruned trees. 
\end{remark}

\begin{example}\label{ex5}
    We will construct the partition cover of the pruned graph with level labelling in Figure \ref{fig:fig9} where $V=\{1,2,\ldots,14\}$. 
    
\begin{align*}
        \{1,7,8,12\} &: [\{1,8\};\;V]&\\
        \{1,7,12,14\} &: [\{1\};\;V\setminus\{8\}]\\[+2mm]
        \{2,3,5,6,9,11\}&: [\{3,9\};\; V\setminus\{1\}]\\
        \{2,3,5,6,11,12\}&: [\{3\};\;V\setminus \{1,9\}]\\[+2mm]
        \{2,4,5,6,7,8,10\}&: [\{4,8\};\;V\setminus\{1,3\}]\\
        \{2,4,5,6,7,10,13,14\}&: [\{4\};\; V\setminus\{1,3,4\}]\\[+2mm]
        \{2,5,6,7,8,10,11,12\}&: [\{8\},V\setminus\{1,3,4\}]\\[+2mm]
        \{2,5,6,7,9,10,11,13,14\}&: [\{9\};\; V\setminus\{1,3,4,8\}]\\[+2mm]
        \{2,5,6,7,10,11,12,13,14\}&: [\{\emptyset\};\; V\setminus\{1,3,4,8,9\}].
\end{align*}
\end{example}

\section{Open Problems}
\begin{enumerate}
    \item Can we generate an interval partition of the independence complex $\mathscr{I}(G)=\{X\subseteq V: X \text{ is independent}\}$?
    \item Let $S$ be a maximal independent set and $X=S\cup Ext(S)$. Is there an independent set $S'\subseteq X$ with $|S'|>|S|$? Are there any conditions for the existence of $S'$?
    \item Can we apply the same idea to other set families, e.g. the neighborhood complex?
    \item Can we find a characterization of the pruned graphs that simplifies their recognition?
    \item Is the definition of a graph polynomial like
    
    $$f(G;x,y,z)=\sum\limits_{S\in \mathscr{M}(G)}x^{|S|}y^{|Ext(S)|}z^{|Int(S)|}$$
    useful?
\end{enumerate}
    \footnotesize
    \printbibliography
\end{document}